\newcommand{\mc}{\mathcal}
\newcommand{\R}{\mathbb R}
\newcommand{\Z}{\mathbb Z}
\renewcommand{\H}{\mathbb H}
\renewcommand{\S}{\mathbb S}
\newcommand{\K}{\mathcal K}
\newcommand{\U}{\mathcal U}
\newcommand{\B}{\mathcal B}
\newcommand{\les}{\lesssim}
\newcommand{\dd}{\,d}
\newcommand{\be}{\begin{equation}}
\newcommand{\ee}{\end{equation}}
\newcommand{\lb}{\label}
\newcommand{\ds}{\displaystyle}
\DeclareMathOperator{\Div}{div}
\renewcommand{\div}{\Div}
\DeclareMathOperator{\Rm}{Rm}
\DeclareMathOperator{\Ric}{Ric}
\DeclareMathOperator{\tr}{tr}
\DeclareMathOperator{\Error}{Error}
\newcommand{\ov}{\overline}
\newtheorem{proposition}{Proposition}
\newtheorem{theorem}[proposition]{Theorem}
\title[Dispersive estimates on Riemannian manifolds]{Dispersive estimates  for Schr\"{o}dinger's and wave equations on Riemannian manifolds}
\author{Marius Beceanu}
\subjclass{35L05, 35Q41, 35B40, 53B20, 58J37, 58J45}
\begin{document}
\begin{abstract}
This paper proves $L^p$ decay estimates for Schr\"{o}dinger's and wave equations with scalar potentials on three-dimensional Riemannian manifolds.

The main result regards small perturbations of a metric with constant negative sectional curvature. We also prove estimates on $\S^3$, the three-dimensional sphere, and $\H^3$, the three-dimensional hyperbolic space.

Most of the estimates hold for the perturbed Hamiltonian $H=H_0+V$, where $H_0$ is the shifted Laplacian $H_0=-\Delta+\kappa_0$, $\kappa_0$ is the constant (or asymptotic) sectional curvature, and $V$ is a small scalar potential.

The results are based on direct estimates of the wave propagator.

All results hold in three space dimensions. The metric is required to have four derivatives.
\end{abstract}
\maketitle
\tableofcontents
\section{Introduction}
\subsection{Setup}
Consider a three-dimensional Riemannian manifold $(\mc M, g)$ with metric $g$, having at least two (not necessarily continuous) derivatives.

Assume that the topology of the manifold is trivial, meaning that the manifold is simply connected. In the negative sectional curvature and flat cases, further assume that the exponential map $\exp_{x_0}: T_{x_0}\mc M \to \mc M$ is bijective for any $x_0 \in \mc M$.

The Riemannian curvature tensor $\Rm$ is given by
\be\lb{rm_def}
\Rm(X, Y) Z = \nabla_X \nabla_Y Z - \nabla_Y \nabla_X Z - \nabla_{[X, Y]} Z = (\nabla^2_{XY} - \nabla^2_{YX}) Z,
\ee
where $\nabla_X Y$ is the covariant derivative with respect to the Levi-Civita connection, which is torsion-free, and $\nabla^2_{XY} = \nabla_X \nabla_Y - \nabla_{\nabla_X Y}$. For a thorough introduction to Riemannian geometry see \cite{Cal}, \cite{Doc}, or \cite{Lee}.

The Ricci tensor $\Ric$ and scalar curvature $R$ are defined as
\be\lb{ricci_def}
\Ric(u, v)=\sum_{k=1}^3 [\Rm(e_k, u) v] \cdot e_k,\ R = \tr \Ric,
\ee
where $\{e_1, e_2, e_3\}$ form an orthonormal basis for the tangent space $T_x \mc M$.

Sectional curvature in the plane determined by two linearly independent vectors $v_1, v_2 \in T_x \mc M$ is
$$
\kappa(v_1, v_2) = \frac {[\Rm(v_1, v_2) v_2] \cdot v_1}{|v_1 \wedge v_2|^2},
$$
where $|v_1 \wedge v_2|^2 = |v_1|^2 |v_2|^2 - |v_1 \cdot v_2|^2$. Sectional curvature depends only on the plane, not on the choice of linearly independent vectors in it.

For manifolds with constant sectional curvature $\kappa_0$, the curvature tensor has the special form
\be\lb{rm0}
\Rm_0(v_1, v_2) v_3 = \kappa_0 [(v_2 \cdot v_3) v_1 - (v_1 \cdot v_3) v_2],
\ee

Let $\Gamma$ denote the set of all geodesics $\gamma$ on the manifold $\mc M$ and let $L^1(\Gamma)$ be the Banach space
\be\lb{l1gamma}
L^1(\Gamma)=\{f \mid \sup_{\gamma \in \Gamma} \|f\|_{L^1(\gamma)} < \infty\}.
\ee
We assume that the metric on $\mc M$ is a small perturbation of one with constant sectional curvature $\kappa_0<0$. Let
$$
\kappa_0=-\alpha_0^2<0,\ \Rm - \Rm_0 = \Rm_1.
$$

Consider the wave and Schr\"{o}dinger equations on $\mc M$, with Hamiltonian
\be\lb{H}
H=H_0+V=-\Delta+\kappa_0+V.
\ee
Here $-\Delta$ is the Laplace--Beltrami operator $-\Delta=-\Delta_g = -\div \nabla$ for the metric $g$, while $\kappa_0$ is the constant sectional curvature, in the case of a manifold with constant sectional curvature. The operator $-\Delta+\kappa_0$ is called the shifted Laplacian.

In addition, the Hamiltonian may include a small scalar potential $V$, $H=H_0+V=-\Delta+V$. To keep the length of the paper reasonable, the possibility of adding a large scalar potential $V$ or a first-order magnetic potential $A \nabla$, as in \cite{MaMeTa} or \cite{BecKwo}, will be considered in a future paper. A method for passing from small to large scalar potentials is given in \cite{BecGol1} and \cite{BecGol2}.

The equations studied in this paper are
\be\lb{wave}
\partial^2_t u + H u = F,\ u(0)=u_0,\ \partial_t u(0)=u_1 \text{ (wave)}
\ee
with propagators $\cos(t \sqrt H)$ and $\ds\frac {\sin(t \sqrt H)}{\sqrt H}$ and
\be\lb{sch}
i \partial_t u + H u = F,\ u(0)=u_0 \text{ (Schr\"{o}dinger)}
\ee
with propagator $e^{itH}$. The case when $V=0$ and $H$ is replaced by $H_0=-\Delta+\kappa_0$ is called the free wave (or Schr\"{o}dinger) equation.

The $\kappa_0$ term makes no difference for the Schr\"{o}dinger equation, because it can be modulated away through multiplying the solution by a time-dependent complex phase.

Consider the global Kato class $\K$, introduced in \cite{RodSch} for the Euclidean space $\R^3$,
\be\lb{kato}
\K = \{V \in L^1_{loc} \mid \sup_{x_0 \in \mc M} \int_{\mc M} \frac {|V(x)| \dd x}{j(d(x_0, x))} < \infty\},
\ee
where $\ds j(r) = \frac {\sinh(r \alpha_0)}{\alpha_0}$, together with the modified Kato space $\tilde \K = \K \cap L^1$,
\be\lb{modk}
\tilde \K = \{V \mid \sup_{x_0 \in \H^3} \int_{\H^3} \frac {|V(x)|}{\min(1, d(x_0, x))} \dd x < \infty\}.
\ee

As we shall see, in the negative curvature setting $L^{2, 1} \subset \K$ and $\K^* \subset L^{2, \infty}$. For definitions of the Lorentz spaces $L^{p, q}$ and more, see \cite{BerLof}. Furthermore, $L^1 \cap L^1(\Gamma) \subset \K$.

\subsection{Results} The following is the main result of this paper.
\begin{theorem}\lb{thm1} Consider a three-dimensional Riemannian manifold $(\mc M, g)$ such that its Riemann curvature tensor $\Rm$ satisfies, for $\kappa_0<0$ and $\Rm_0$ given by (\ref{rm0}),
$$
\|\Rm-\Rm_0\|_{L^1(\Gamma)}=\|\Rm_1\|_{L^1(\Gamma)} < \epsilon,\ \|\nabla \Rm\|_{L^1(\Gamma)} < \epsilon,\ \|\nabla^2 \Rm\|_{L^1(\Gamma)} < \epsilon
$$
and in addition
$$
\|\Rm_1\|_{L^1} < \epsilon,\ \|\nabla \Rm\|_{L^1} < \epsilon,\ \|\nabla^2 \Rm\|_{L^1} < \epsilon.
$$
Let $H=-\Delta+\kappa_0+V$ be a Hamiltonian such that $\|V\|_\K<\epsilon$.

Then for sufficiently small $\epsilon>0$ the cosine propagator for the wave equation (\ref{wave}) fulfills the $L^1 \to L^\infty$ bound
\be\lb{new3}
\|\cos(t\sqrt{H}) f\|_{L^\infty} \les t^{-1} \|f\|_{W^{2, 1}}.
\ee
In addition, solutions to Schr\"{o}dinger's equation (\ref{sch}) fulfill the $L^1 \to L^\infty$ pointwise decay estimate
$$
\|e^{itH} f\|_{L^\infty} \les |t|^{-3/2} \|f\|_{L^1}.
$$
For any $\delta \in (0, \alpha_0)$, supposing that $V \in \tilde \K$ and that $\epsilon$ and $\|V\|_{\tilde \K}$ are sufficiently small, one can replace $t$ by $\sinh(t(\alpha_0-\delta))$ in (\ref{new3}).
\end{theorem}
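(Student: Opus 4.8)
The plan is to upgrade the polynomial decay in (\ref{new3}) to an exponentially weighted bound by revisiting, with an extra weight, the two perturbation series on which the proof of (\ref{new3}) rests: first the expansion of $\cos(t\sqrt{H_0})$ for the perturbed metric $g$ around the propagator $\cos(t\sqrt{\widetilde H_0})$ of the shifted Laplacian $\widetilde H_0$ on the model space of constant curvature $\kappa_0=-\alpha_0^2$, and then the Born series for $\cos(t\sqrt H)$ in the potential $V$ around $\cos(t\sqrt{H_0})$. On the model space one has strict Huygens' principle: $\frac{\sin(t\sqrt{\widetilde H_0})}{\sqrt{\widetilde H_0}}$ and $\cos(t\sqrt{\widetilde H_0})$ are (first derivatives of) measures carried by the light cone $\{d(x,y)=|t|\}$ with density comparable to $1/j(|t|)=\alpha_0/\sinh(\alpha_0|t|)$. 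Since $\sinh(|t|(\alpha_0-\delta))^{-1}\approx|t|^{-1}$ for $|t|\le 1$ and decays exponentially for $|t|\ge 1$, the final claim is exactly the statement that this light-cone decay rate, with an arbitrarily small loss $\delta$ in the exponent, survives both series once $\epsilon$ and $\|V\|_{\tilde\K}$ are small.

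First I would record the model estimate $\|\cos(t\sqrt{\widetilde H_0})f\|_{L^\infty}\les\sinh(\alpha_0|t|)^{-1}\|f\|_{W^{2,1}}$, directly from the explicit hyperbolic wave kernel (here no $\delta$ is needed). The algebraic input that lets this rate propagate is the submultiplicativity
\be
\sinh\big(\alpha_0(a+b)\big)\ge\sinh(\alpha_0 a)\,\sinh(\alpha_0 b),\qquad a,b\ge 0,
\ee
which, together with the triangle inequality for $d$, shows that a composition of two light-cone-type kernels at travel times $a$ and $b$ is again dominated by a kernel with decay $\sinh(\alpha_0\,\cdot\,)^{-1}$ at the total travel time $a+b$ --- up to one multiplicative constant per factor, coming from the region where one travel time is $O(1)$ and $\sinh$ fails to be comparable to the exponential. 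Absorbing those constants is precisely what forces $\alpha_0$ to be replaced by $\alpha_0-\delta$: lowering the rate turns the chain of weak inequalities into a geometric series whose ratio, by the smallness of $\|\Rm_1\|_{L^1(\Gamma)},\|\nabla\Rm\|_{L^1(\Gamma)},\|\nabla^2\Rm\|_{L^1(\Gamma)}$ and of the corresponding $L^1$ norms, is $O(\epsilon)$. Summing it yields $\|\cos(t\sqrt{H_0})f\|_{L^\infty}\les\sinh(|t|(\alpha_0-\delta))^{-1}\|f\|_{W^{2,1}}$ for the perturbed metric.

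Then I would insert the potential. Running the Duhamel/Born expansion of $\cos(t\sqrt H)$ in $V$, a typical iterate is built from blocks $\int_0^t\frac{\sin((t-s)\sqrt{H_0})}{\sqrt{H_0}}\,V\,\cos(s\sqrt{H_0})\dd s$, and on the level of kernels each $V$-vertex contributes an integration over an intermediate point $x_j$ weighted by $|V(x_j)|$ divided by the two adjacent light-cone densities. This is where $\tilde\K=\K\cap L^1$ is used rather than $\K$ alone: the $\K$ part makes every vertex integral finite and $O(\|V\|_\K)$ --- the Rodnianski--Schlag mechanism already present in the proof of (\ref{new3}) --- while combining the additional $L^1$ integrability with the weight $\sinh(\alpha_0 d(\cdot,\cdot))$, again through the submultiplicativity above and the triangle inequality, lets the exponentially weighted bound pass through each vertex at the cost of one more factor $\alpha_0\to\alpha_0-\delta$ and a constant multiple of $\|V\|_{\tilde\K}$. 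Summing the resulting geometric series, of ratio $O(\epsilon+\|V\|_{\tilde\K})<1$, produces (\ref{new3}) with $t$ replaced by $\sinh(t(\alpha_0-\delta))$.

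The main obstacle is the metric-perturbation step. A perturbed metric does not obey strict Huygens' principle, so $\cos(t\sqrt{H_0})$ is no longer carried exactly by the light cone; one must show that its tail off the cone, together with the error terms generated by $\Rm_1$ and its first two covariant derivatives, is controlled uniformly in the $L^1(\Gamma)$-type norms, and --- the delicate point --- that integrating the exponential weight against these nearly-cone-concentrated error kernels costs only a fixed constant per order rather than an exponentially growing one, so that the geometric series still closes. This is exactly where integrability of the curvature perturbation along every geodesic (the $L^1(\Gamma)$ hypotheses) is needed and where the freedom to give up an arbitrarily small $\delta>0$ in the rate is essential. The degeneracy of $\sqrt{H_0}$ at the bottom of the spectrum is handled as in the proof of (\ref{new3}), the shift by $\kappa_0$ making that threshold regular.
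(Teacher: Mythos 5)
Your two-stage architecture (metric perturbation around the constant-curvature model, then a Born series in $V$, with exponential decay propagated through the iterations at the cost of a $\delta$-loss) matches the paper, and you correctly identify the role of $\tilde\K=\K\cap L^1$. But the key algebraic input you propose does not close the argument. The inequality $\sinh(\alpha_0(a+b))\geq\sinh(\alpha_0 a)\sinh(\alpha_0 b)$ goes the wrong way for your purpose: you want the composed kernel, of size roughly $1/(\sinh(\alpha_0 a)\sinh(\alpha_0 b))$, to be dominated by a multiple of $1/\sinh(\alpha_0(a+b))$, which requires the \emph{reverse} inequality — and that fails whenever $a$ or $b$ is small. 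Even setting aside the sign, the pointwise-in-time composition $\int_0^t \dd s/(\sinh(\alpha_0 s)\sinh(\alpha_0(t-s)))$ carries a logarithmic divergence at the endpoints that no $\delta$-shift in the exponent can cure, because that obstruction is local in time, not exponential.

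The paper avoids this by working in the time-integrated algebroid $\U(X,Y)$ of kernels whose $L^1$-in-time integral is bounded $X\to Y$, and by multiplying with the weights $j_\delta(t),j'_\delta(t)$ via the Leibniz-type identities $j(t+s)=j(t)j'(s)+j'(t)j(s)$ and $j'(t+s)\leq 2j'(t)j'(s)$. These give the composition rules (\ref{comp1})--(\ref{comp4}) with $\tilde\K,\tilde\K^*$ as intermediate lattices, and the ratio of each geometric series is the weighted $\U$-norm of the error measure $E$ (resp.\ of $V\chi_{t\geq0}S_0$), which is $O(\epsilon)$ (resp.\ $O(\|V\|_{\tilde\K})$). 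The $\delta$-loss is not mainly about absorbing constants: it is forced by the explicit error term $\tilde E(x_0,\omega)/j^3(r)$ in (\ref{explicit}), for which $\int j'(r)/j(r)\,dr$ diverges since $j'/j\to\alpha_0$, whereas $\int j'_\delta(r)/j(r)\,dr<\infty$. You also leave out the passage from the sine-propagator bounds to the $W^{2,1}$ cosine-propagator bound in (\ref{new3}); the paper identifies $|\cos(t\sqrt H)/H|$ with $|\int_t^\infty S_V(\tau)\,d\tau|$, whose $\B(L^1,L^\infty)$ norm is $\les j_\delta(t)^{-1}$, with the two derivatives accounting for the factor of $H$.
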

All the conditions stated in terms of the Riemann curvature tensor (\ref{rm_def}) can be stated equivalently in terms of the Ricci tensor (\ref{ricci_def}).

We also include some model results that hold for the manifolds with constant sectional curvature $\S^3$ (the three-dimensional sphere) and $\H^3$ (the three-dimensional hyperbolic space). For simplicity, normalize the sectional curvature to $\pm 1$.

\begin{theorem}\lb{thm2} Solutions to the free wave equation (\ref{wave}) on $\S^3$ satisfy the following estimates:
$$
\bigg\|\frac{\sin(t \sqrt{H_0})}{\sqrt{H_0}}f \bigg\|_{L^\infty} \les \frac 1 {\sin t} \|f\|_{W^{1, 1}},\ \|\cos(t\sqrt{H_0}) f\|_{L^\infty} \les \frac 1 {\sin t} \|f\|_{W^{2, 1}},
$$
and for $p \in (1, 2]$
$$
\| e^{it\sqrt{H_0}} f \|_{L^{p'}} \les |\sin t|^{\frac 1 {p'} - \frac 1 p} \|f\|_{W^{2(\frac 1 p - \frac 1 {p'}), p}}.
$$
\end{theorem}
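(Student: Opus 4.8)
The plan is to reduce Theorem~\ref{thm2} to explicit kernel computations on $\S^3$. There $H_0=-\Delta+1$ is diagonalized by spherical harmonics: the degree-$\ell$ eigenspace $\mc H_\ell$ of $-\Delta$ has eigenvalue $\ell(\ell+2)$, hence $H_0$-eigenvalue $(\ell+1)^2$, so $\sqrt{H_0}$ has the pure point spectrum $\{1,2,3,\dots\}$ and all three propagators are $2\pi$-periodic in $t$. The zonal kernel of the projection $P_\ell$ onto $\mc H_\ell$ is, by the addition theorem for spherical harmonics on $\S^3$ (the relevant Gegenbauer index is $\lambda=1$, i.e.\ second-kind Chebyshev polynomials, and $\dim\mc H_\ell=(\ell+1)^2$, $|\S^3|=2\pi^2$),
\[
P_\ell(x,y)=\frac{\ell+1}{2\pi^2}\,\frac{\sin((\ell+1)r)}{\sin r},\qquad r=d(x,y).
\]
Summing the resulting trigonometric series by Poisson summation,
\[
\frac{\sin(t\sqrt{H_0})}{\sqrt{H_0}}(x,y)=\frac{1}{2\pi^2\sin r}\sum_{n\ge1}\sin(nt)\sin(nr)=\frac{1}{4\pi\sin r}\sum_{k\in\Z}\bigl[\delta(t-r-2\pi k)-\delta(t+r-2\pi k)\bigr],
\]
which for $t\in(0,\pi)$ collapses to $\delta(r-t)/(4\pi\sin t)$: the sharp Huygens principle on $\S^3$. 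Writing $\bar u_0(r)=\tfrac1{4\pi}\int_{\S^2}f(\exp_x(r\omega))\dd\omega$ for the mean of $f$ over the geodesic sphere $S_r(x)$, this reads $\frac{\sin(t\sqrt{H_0})}{\sqrt{H_0}}f(x)=\sin t\,\bar u_0(t)$, and therefore $\cos(t\sqrt{H_0})f(x)=\partial_t\bigl[\sin t\,\bar u_0(t)\bigr]=\cos t\,\bar u_0(t)+\sin t\,\bar u_0'(t)$. Carrying the extra $(\ell+1)$-factor through, the analogous sum for $\sin(t\sqrt{H_0})$ produces a kernel with a finite-part $(r-t)^{-2}$ singularity across $S_t(x)$. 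Values of $t$ outside $(0,\pi)$ are reduced to this case by $2\pi$-periodicity and the reflection $S_r(x)=S_{\pi-r}(-x)$.

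The analytic core is a uniform trace-type inequality plus one elementary estimate. First, for every $x\in\S^3$ and every $\rho\in[\tfrac\pi4,\tfrac{3\pi}4]$,
\[
\int_{S_\rho(x)}|g|\dd\sigma\les\|g\|_{L^1(\S^3)}+\|\nabla g\|_{L^1(\S^3)},
\]
with constant independent of $x$ and $\rho$: write the left side as $\sin^2\!\rho\int_{\S^2}|g(\exp_x(\rho\omega))|\dd\omega$, bound $|g(\exp_x(\rho\omega))|$ by its average over $s\in[\tfrac\pi4,\tfrac{3\pi}4]$ along the radial geodesic plus $\int_{\pi/4}^{3\pi/4}|\nabla g(\exp_x(s\omega))|\dd s$, and absorb both into $\int_{\S^3}(|g|+|\nabla g|)\dd V$ using $\sin^2\! s\ge\tfrac12$ there. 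Second, $(s-r)\sin r\le\tfrac\pi2\sin^2\! s$ whenever $0<r\le s\le\tfrac\pi2$ (from $\sin s\ge\tfrac2\pi s\ge\tfrac2\pi(s-r)$ and $\sin r\le\sin s$). Taylor-expanding $s\mapsto f(\exp_x(s\omega))$ about $s=\tfrac\pi2$, whose integral remainder involves $(\nabla^2 f)(\dot\gamma,\dot\gamma)$ along a unit-speed geodesic, and inserting these two facts, one gets for $r\in(0,\tfrac\pi2]$ (and, by reflection, on $[\tfrac\pi2,\pi)$)
\[
|\bar u_0(r)|\les\frac{1}{\sin r}\,\|f\|_{W^{2,1}},\qquad\text{and, keeping only one Taylor term,}\qquad |\bar u_0(r)|\les\frac{1}{\sin^2 r}\,\|f\|_{W^{1,1}}.
\]

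The two wave estimates then drop out. For the sine propagator, $\bigl|\tfrac{\sin(t\sqrt{H_0})}{\sqrt{H_0}}f(x)\bigr|=\sin t\,|\bar u_0(t)|\les(\sin t)^{-1}\|f\|_{W^{1,1}}$. For the cosine, $\cos t\,\bar u_0(t)$ is handled by the $W^{2,1}$-bound above, while $\sin t\,\bar u_0'(t)=\tfrac1{4\pi\sin t}\int_{B_t(x)}\Delta f\dd V$ by the divergence theorem (indeed $\bar u_0'(r)$ is the flux of $\nabla f$ through $S_r(x)$), hence $\les(\sin t)^{-1}\|\Delta f\|_{L^1}\les(\sin t)^{-1}\|f\|_{W^{2,1}}$. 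For the half-wave $e^{it\sqrt{H_0}}$ I would argue by interpolation: it is unitary on $L^2$ (this is the $p=2$ case, where $|\sin t|^0=1$), and at the other end the same kernel analysis gives the endpoint $\|e^{it\sqrt{H_0}}f\|_{L^\infty}\les|\sin t|^{-1}\|f\|_{B^2_{1,1}(\S^3)}$ -- a Besov, not Sobolev, norm is forced because $e^{it\sqrt{H_0}}=\cos(t\sqrt{H_0})+i\sin(t\sqrt{H_0})$ and the imaginary part carries the non-local finite-part $(r-t)^{-2}$ tail, which one tames by decomposing into dyadic blocks $\{\ell+1\sim2^j\}$ and summing in $\ell^1$. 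Real interpolation between these two endpoints, together with the standard identification of domains of fractional powers of $H_0$ with Sobolev(--Slobodeckij) spaces on the compact manifold $\S^3$, yields precisely $\|e^{it\sqrt{H_0}}f\|_{L^{p'}}\les|\sin t|^{1/p'-1/p}\|f\|_{W^{2(1/p-1/p'),p}}$ for $p\in(1,2]$; the endpoint $p=1$ is excluded exactly because of the logarithmic (Besov) obstruction.

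I expect the main obstacle to split along these two halves. The softer part -- the closed forms and the two wave bounds -- hinges entirely on getting the distributional sums right: the $\delta'$ and finite-part terms, and the sign reversal for $t$ near odd multiples of $\pi$, must be tracked carefully, but once they are in hand the device of ``pushing the spherical mean toward the equatorial sphere $S_{\pi/2}(x)$'' together with $(s-r)\sin r\le\tfrac\pi2\sin^2\! s$ does the rest cheaply and with no weighted norms. The genuinely delicate point is the half-wave estimate: the finite-part tail of $\sin(t\sqrt{H_0})$ forbids staying at $p=1$ with a pure Sobolev norm, so one must route through a Besov endpoint and real interpolation, and then check that no derivative is lost in landing on $W^{2(1/p-1/p'),p}$ throughout $1<p\le2$ (in particular at the borderline $p=4/3$, where the target order equals $1$).
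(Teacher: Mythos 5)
Your route for the two wave bounds is correct but genuinely different from the paper's. The paper derives the explicit free sine kernel $S_0(t)=\frac{1}{4\pi\sin t}\delta_{d(x,y)=t}$ from an ansatz (Proposition~\ref{prop1}) rather than from the spherical-harmonic addition theorem, and then works directly with integrated kernel bounds: the key observation is that
$$
\int_t^{\pi-t}|S_0(\tau)(x,y)|\dd\tau=\frac{\chi_{t<d(x,y)<\pi-t}}{4\pi\sin d(x,y)}\leq\frac1{4\pi\sin t},
$$
which (via $\int_t^{\pi-t}S_0(\tau)\dd\tau=(I+R)\frac{\cos(t\sqrt{H_0})}{H_0}$, $R$ the antipodal map) gives the clean $L^1\to L^\infty$ bound $\|\frac{\cos(t\sqrt{H_0})}{H_0}f\|_{L^\infty}\les(\sin t)^{-1}\|f\|_{L^1}$, and hence the $W^{2,1}$ cosine estimate by applying this to $H_0 f$. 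Your approach instead goes through the spherical mean $\bar u_0(r)$ and proves two pointwise bounds, $|\bar u_0(r)|\les(\sin r)^{-2}\|f\|_{W^{1,1}}$ and $|\bar u_0(r)|\les(\sin r)^{-1}\|f\|_{W^{2,1}}$, by Taylor-expanding at the equator $s=\pi/2$, using the trace inequality on $S_\rho(x)$ with $\rho\in[\pi/4,3\pi/4]$ and the elementary bound $(s-r)\sin r\le\frac\pi2\sin^2 s$, and then using the divergence theorem for $\sin t\,\bar u_0'(t)=\frac{1}{4\pi\sin t}\int_{B_t(x)}\Delta f$. I checked the kernel normalization, the Taylor/trace inequalities, and the divergence-theorem identity; they all hold, so this is a valid alternative derivation of the sine and cosine estimates. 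What the paper's version buys is brevity and a single mechanism (one integrated-kernel inequality for both); what yours buys is a self-contained, elementary argument that avoids relying on $\frac{\cos(t\sqrt{H_0})}{H_0}$ having a pointwise-bounded kernel.

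For the half-wave estimate the paper is terse, stating the $L^p\to L^{p'}$ bound in the equivalent form $\|H_0^{-(1/p-1/p')}e^{it\sqrt{H_0}}f\|_{L^{p'}}\les(\sin t)^{1/p'-1/p}\|f\|_{L^p}$ as a consequence of (\ref{next1}), (\ref{next2}), and unitarity on $L^2$, implicitly via analytic interpolation of $H_0^{-z}e^{it\sqrt{H_0}}$; the Sobolev norm in the theorem then comes from the elliptic identification $\|H_0^{1/p-1/p'}g\|_{L^p}\approx\|g\|_{W^{2(1/p-1/p'),p}}$ valid for $p\in(1,\infty)$. Your observation that the naive $L^1\to L^\infty$ endpoint $H_0^{-1}e^{it\sqrt{H_0}}$ fails (logarithmically, from the $\sum_n\sin(nt)\sin(nr)/n$ piece) is correct and worth making explicit, but your proposed fix --- real interpolation between $L^2$ unitarity and a $B^2_{1,1}\to L^\infty$ endpoint obtained by dyadic decomposition --- lands you in a Besov space $B^{2(1/p-1/p')}_{p,q}$ with $q$ too small, and for $p<2$ the needed embedding $W^{2(1/p-1/p'),p}\hookrightarrow B^{2(1/p-1/p')}_{p,q}$ fails when $q<2$. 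To recover the Sobolev norm in the conclusion you should instead use complex (Stein) interpolation of the analytic family $H_0^{-z}e^{it\sqrt{H_0}}$ with an $H^1\to L^\infty$ (or $L^1\to BMO$) endpoint at $\Re z=1$, as in the Euclidean half-wave theory, and only afterwards apply the fractional-power/Sobolev identification. This gives exactly the stated estimate for all $p\in(1,2]$, with $p=1$ excluded for precisely the reason you identified.
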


\begin{theorem} Consider the wave equation (\ref{wave}) on $\H^3$ with a scalar potential $V \in \K$. If $\|V\|_\K$ is small, then
\be\lb{new1}
\bigg\|\frac{\sin(t \sqrt{H})}{\sqrt{H}}f \bigg\|_{L^\infty} \les |t|^{-1} \|f\|_{W^{1, 1}},\ \|\cos(t\sqrt{H}) f\|_{L^\infty} \les |t|^{-1} \|f\|_{W^{2, 1}},
\ee
and for $p \in (1, 2]$
\be\lb{new2}
\| e^{it\sqrt{H}} f \|_{L^{p'}} \les |t|^{\frac 1 {p'} - \frac 1 p} \|f\|_{W^{2(\frac 1 p - \frac 1 {p'}), p}}.
\ee
In addition, solutions to Schr\"{o}dinger's equation (\ref{sch}) fulfill the estimate
$$
\|e^{itH} f\|_{L^\infty} \les |t|^{-3/2} \|f\|_{L^1}.
$$
Supposing that $V \in \tilde \K$ is small in this norm, one can replace $t$ by $\sinh t$ in (\ref{new1}) and (\ref{new2}).
\end{theorem}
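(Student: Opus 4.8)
The plan is to use the exact structure of the free propagators on $\H^3$ and then treat $V$ perturbatively. The key elementary fact, available because the curvature is exactly $-1$, is the conformal-type substitution $u = v/\sinh r$, under which the radial part of $H_0 = -\Delta - 1$ is intertwined with $-\partial_r^2$, namely $(\Delta + 1)(v/\sinh r) = v''/\sinh r$. Consequently the shifted wave equation on $\H^3$ reduces to the one-dimensional wave equation and satisfies sharp Huygens' principle; equivalently, the spherical function is $\phi_\lambda(r) = \sin(\lambda r)/(\lambda\sinh r)$ and the Plancherel density is $c\,\lambda^2\,d\lambda$. Working this out gives the identities
$$
\frac{\sin(t\sqrt{H_0})}{\sqrt{H_0}} = \sinh(t)\,A_t, \qquad \cos(t\sqrt{H_0}) = \partial_t\bigl(\sinh(t)\,A_t\bigr), \qquad A_t f(x) = \frac{1}{4\pi}\int_{\S^2}f(\exp_x(t\omega))\,d\omega,
$$
where $A_t$ is the normalized geodesic-sphere mean, while the free Schr\"odinger kernel between points at distance $\rho$ has modulus comparable to $|t|^{-3/2}\,\rho/\sinh\rho$ (times a unimodular factor). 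From these formulas the free estimates drop out: the bound $\|e^{itH_0}f\|_{L^\infty}\lesssim|t|^{-3/2}\|f\|_{L^1}$ is immediate from $\rho/\sinh\rho\le 1$, and the wave bounds follow from the same co-area/divergence-theorem computation as in $\R^3$, which gives $\sup_x\int_{S(x,|t|)}|f|\,dS \lesssim \|\nabla f\|_{L^1}$ (using $\sinh^2 r \ge \sinh^2|t|$ for $r \ge |t|$); this in fact produces the exponentially weighted free bounds $(\sinh|t|)^{-1}\|f\|_{W^{1,1}}$ and $(\sinh|t|)^{-1}\|f\|_{W^{2,1}}$, which dominate the $|t|^{-1}$ bounds asserted in \eqref{new1}. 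The fixed-time $L^{p'}$ estimate \eqref{new2} then follows by complex interpolation between the $L^1\to L^\infty$ bound and the $L^2$ conservation law for $e^{it\sqrt{H_0}}$.

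To incorporate $V$, I would pass to the resolvent side: write each propagator as a contour integral of $R_V(z) = (H-z)^{-1}$ against the appropriate spectral weight (Stone's formula for $e^{itH}$, and the analogous representations for $\cos(t\sqrt H)$ and $\frac{\sin(t\sqrt H)}{\sqrt H}$), and expand via the second resolvent identity $R_V(z) = (I + R_0(z)V)^{-1}R_0(z)$. The free resolvent kernel on $\H^3$ is, uniformly in the relevant spectral parameter, bounded by $C/j(d(x,y))$ with $j(r) = \sinh r$ — again visible from the $\sinh$-substitution — so that $\|R_0(z)V\|_{L^\infty\to L^\infty}\lesssim\|V\|_\K$; for $\|V\|_\K$ small the Born series $\sum_{n\ge 0}(-R_0(z)V)^n$ therefore converges uniformly in $z$, and the same smallness, by a Birman--Schwinger argument, excludes eigenvalues of $H$ and a resonance at the bottom $z = 0$ of the spectrum. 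Substituting this expansion into the contour integrals and propagating the free kernel bounds through each term yields the perturbed estimates with $|t|^{-1}$ (wave) and $|t|^{-3/2}$ (Schr\"odinger) decay, with \eqref{new2} again obtained by interpolation. The upgrade to the $\sinh t$ weight requires the stronger hypothesis $V \in \tilde\K = \K \cap L^1$: a naive Duhamel iteration produces convolutions like $\int_0^t (\sinh(t-s))^{-1}(\sinh s)^{-1}\,ds$, which diverge logarithmically, so the exponential-in-time gain of the free kernels survives the perturbation only when $V$ carries the extra global $L^1$ integrability present in $\tilde\K$ — exactly what is needed to close the weighted resolvent estimates at the spectral edge with exponential spatial weights.

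I expect the main obstacle to be precisely this last point: obtaining resolvent bounds for $R_V(z)$ that are uniform up to $z = 0$ and retain the exponential spatial decay $e^{-d(x,y)}/d(x,y)$ of the free resolvent, and then converting them into the two different time weights ($|t|^{-1}$ under $\K$, $\sinh t$ under $\tilde\K$). The low-energy analysis is delicate because the continuous spectrum of $H_0$ starts at $0$, so the contour integrals are not uniformly separated from the edge; one must verify invertibility of $I + R_0(0)V$ on the Kato-adapted space and integrability of the resulting bounds against the spectral density near $z = 0$. Once these resolvent estimates are in hand, \eqref{new1} follows by evaluating the contour integrals (with sharp Huygens controlling the leading free term), \eqref{new2} by interpolation with $L^2$-conservation, and the Schr\"odinger bound either directly from Stone's formula or, as the abstract indicates, by subordinating $e^{itH}$ to $\cos(s\sqrt H)$ through an oscillatory integral in $s$ and invoking the wave bounds just established.
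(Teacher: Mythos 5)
Your plan is sound, and the free-kernel identities, the small-$V$ perturbative philosophy, and the role you assign to $\K$ versus $\tilde\K$ all match the paper's. The genuine divergence is \emph{where} the perturbation series is summed: you expand the resolvent $R_V(z)=(I+R_0(z)V)^{-1}R_0(z)$ in the spectral parameter and only then go to the time domain through contour integrals, whereas the paper works entirely in physical time, expanding the \emph{propagator} as a Duhamel series
$\chi_{t\ge 0}S = \chi_{t\ge 0}S_0*(I+V\chi_{t\ge 0}S_0)^{-1}$
in the algebroid $\U(X,Y)$ whose norm is $\sup_{x,y}\int_{\R}|T(t,y,x)|\,dt$. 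The two pictures are Fourier conjugates, and your observation that the free resolvent kernel is bounded by $1/\sinh d(x,y)$ uniformly in $z$ is exactly what makes $\|V\chi_{t\ge 0}S_0\|_{\U(L^1)\cap\U(\K)}\lesssim\|V\|_\K$ small, so the series sums either way. What the paper's time-domain choice buys is that the two weights you need afterward fall out by algebra rather than by oscillatory-integral analysis: multiplication by $t$ is a derivation on $\U$, which immediately yields $tS_V\in\U(L^1,L^\infty)$ from $tS_0\in\U(L^1,L^\infty)$ (this is the counterpart of one integration by parts in $\lambda$), and the exponential weight propagates through convolutions via $\sinh(t+s)\le\sinh t\cosh s+\cosh t\sinh s$, giving the explicit sub-multiplicativity inequalities (\ref{comp1})--(\ref{comp4}) that close the $\tilde\K$ estimates. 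In your frequency-side version, the step you leave implicit --- converting uniform-in-$z$ bounds on the Born chain into $|t|^{-1}$ and $|t|^{-3/2}$ decay --- is genuinely nontrivial: it requires $\partial_\lambda$ estimates on the chain, which pull out factors of $d(x,y)$ from $\partial_\lambda R_0$ that must be distributed across the chain by the triangle inequality $d(x_0,x_n)\le\sum d(x_{k-1},x_k)$ (the Rodnianski--Schlag device on $\R^3$). That can certainly be made to work on $\H^3$, but it is not automatic from kernel boundedness, and the paper's $\U$-formulation is precisely designed to replace it by the derivation identity. Finally, for the Schr\"odinger estimate the paper does what you call subordination, but explicitly: after Stone's formula and an integration by parts in $\lambda$, the inverse Fourier transform of $\partial_\lambda R_0((\lambda\pm i0)^2)$ is $\chi_{t\ge 0}\,it\,S_0(t)$, whose $\U(L^1,L^\infty)$ norm is $\sup_\rho\rho/\sinh\rho\le 1$; the stationary-phase factor $t^{-1/2}$ from $\mathcal F[e^{it\lambda^2}]$ then gives $t^{-3/2}$. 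You should make that integration by parts and the resulting $tS$-integrability explicit; once you do, your proof and the paper's agree in substance.
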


\subsection{History of the problem}

Equations (\ref{wave}) and (\ref{sch}) on the flat Euclidean space $\R^n $ have been extensively studied elsewhere; it would be impossible to do justice to the large number of papers written on this topic.

Much work has been done on the wave equation on $\R^n$ with variable coefficients, or, equivalently, for small perturbations of the flat metric on $\R^n$. This includes \cite{GebTat}, \cite{HasZha}, \cite{Kap}, \cite{MaMeTa}, \cite{MetTat}, \cite{MoSeSo}, \cite{ScSoSt1}, \cite{ScSoSt2}, \cite{Smi}, \cite{SmiSog1}, \cite{SmiSog2}, \cite{SogWan}, \cite{StaTat}, \cite{Tat1}, \cite{Tat2}, \cite{Tat3}, \cite{Tat4}, \cite{Tat5}, \cite{Tat6}, and \cite{Zha}, among other papers.

Previous works on decay and Strichartz estimates for the wave (\ref{wave}) and Schr\"{o}dinger (\ref{sch}) equations on hyperbolic space (with constant negative sectional curvature) include \cite{AnkPie1}, \cite{AnkPie2}, \cite{AnPiVa1}, \cite{AnPiVa2}, \cite{Has}, \cite{Ion}, \cite{MetTay}, and \cite{Tat3}.

Most of these works study the shifted wave equation (\ref{wave}), while others, such as \cite{MetTay}, study it without the correction $\kappa_0$.  The $\kappa_0$ term in (\ref{H}) arises naturally from computations and is essential in the proof.

This adjustment also works in the case of small perturbations of a metric with constant sectional curvature $\kappa_0<0$.

Previously, Strichartz estimates for the Schr\"{o}dinger (\ref{sch}) and wave (\ref{wave}) equations on asymptotically hyperbolic non-compact manifolds were proved in \cite{Che} and \cite{SSWZ}.

By comparison, the current work's assumptions are stronger and also produce stronger estimates: $L^p$ decay estimates imply Strichartz estimates. Some estimates are new even in the constant curvature cases $\H^3$ and $\S^3$.

In three dimensions the spectrum of the Laplace-Beltrami operator $-\Delta$ on $\H^3$ is $[-\kappa_0, \infty)$. Thus the spectrum of the shifted Laplacian $-\Delta+\kappa_0$ starts at $0$, so it is in some sense more natural.

Likewise, on $\S^3$ the spectrum of $-\Delta$ is discrete and consists of
$$
\sigma(-\Delta_{\S^3}) = \{\ell(\ell+2)\kappa_0 \mid \ell \in \Z, \ell \geq 0\},
$$
with multiplicity $(\ell+1)^2$. Replacing the Laplacian by the shifted Laplacian (\ref{H}), the spectrum of $-\Delta+\kappa_0$ is now a sequence of perfect squares $(\ell+1)^2 \kappa_0$, which is again more natural.

Due to the finite speed of propagation of solutions to the wave equation, on compact manifolds such as $\S^3$ decay estimates can follow by piecing together local in time estimates on smaller portions, such as those in \cite{Kap} or \cite{MoSeSo}. Thus, Theorem \ref{thm2} is stated more for the simplicity of its proof.

The current paper uses a more geometric method for the study of equations (\ref{wave}) and (\ref{sch}) on curved metric backgrounds, based on $L^1$ estimates, unlike most previous papers that employ local energy decay and $L^2$-based methods.

Future papers will consider other cases, such as that of non-negative sectional curvature or that of four-dimensional pseudo-Riemannian manifolds, in particular Einstein manifolds. These other cases come with an added difficulty, caused by the presence of conjugate points, i.e.~points at which solutions of Jacobi's equation (\ref{jacobi1}) vanish.


\section{Preliminaries}
\subsection{Curvature and Jacobi fields}
Einstein manifolds are those manifolds for which the Ricci tensor is a scalar multiple of the metric:
\be\lb{einstein}
\Ric=\kappa(x) g.
\ee
Note that $g=I$ when written in an orthonormal basis. Consequently, sectional curvature is the same in all directions, at any given point $x$, and equals $\kappa(x)$.

Equation (\ref{einstein}) implies by Schur's lemma, in any dimension greater than two, that $\kappa(x)$ is constant. Thus constant sectional curvature pointwise is the same as constant sectional curvature globally, in dimensions three and higher.

In three space dimensions, the Einstein manifolds are those with constant sectional curvature, which admit as universal cover the sphere $\S^3$, $\R^3$, or the hyperbolic space $\H^3$.

For simplicity, this paper only considers simply connected manifolds $\mc M$ such as $\S^3$, $\R^3$, and the hyperbolic space $\H^3$, as well as small metric perturbations of the latter.

Fix an arbitrary point $x_0 \in \mc M$ and consider metric balls and spheres
$$
B(x_0, r)=\{x \in \mc M \mid d(x_0, x) < r\},\ \partial B(x_0, r)=\{x \in \mc M \mid d(x_0, x) = r\},
$$
as well as the geodesic distance function $r:\mc M \to \R$, $r=d(x_0, x)$, for any $x \in \mc M$. Clearly $|\nabla r|=1$ and $\nabla r$ is normal to $\partial B(x_0, r)$.

Consider any other point $x \in \mc M$, $x \ne x_0$. Let $\{e_1, e_2, e_3\}$ be an orthonormal basis for $T_{x_0} \mc M$, the tangent space of $\mc M$ at $x_0$, with $e_3$ being the tangent vector of the geodesic $x_0x$. By parallel transport we extend $e_1$, $e_2$, and $e_3$ to vector fields along the geodesic $x_0x$, with $e_3=\nabla r$ and $\partial_r = \nabla_{e_3}$.

Note that $e_3 = \omega$ depends on $\omega \in \S^2$, the direction of the geodesic, hence so do $e_1$ and $e_2$. For our computations, $e_1$ and $e_2$ need not be defined for every $\omega \in \S^2$, but only on some neighborhood of the original $\omega$.

For concreteness, consider spherical coordinates on $\S^2 \subset T_{x_0} \mc M$. In the direction $\omega=(\theta, \phi) \in \S^2$ corresponding to neither pole let
$$\begin{aligned}
\omega=e_3&=(\sin \theta, \cos \theta \cos \phi, \cos \theta \sin \phi) \\
e_1&=(\cos \theta, -\sin \theta \cos \phi, -\sin \theta \sin \phi) \\
e_2&=(0, -\sin \phi, \cos \phi).
\end{aligned}$$
This assignment is discontinuous at the poles. In some sense this is the best we can do, since the tangent bundle of $\S^2$ is not parallelizable, but $\S^2$ can be covered by finitely many (two) such charts.

To understand the area element $dA$ of $\partial B(x_0, r)$ at $x$ as a function of $r$, consider two Jacobi vector fields $J_1(r)$ and $J_2(r)$ defined along the geodesic $x_0x$ as solutions of the Jacobi equation
\be\lb{jacobi1}
\partial^2_r J + \Rm(J, e_3) e_3 = 0,
\ee
with initial conditions
\be\lb{jacobi2}
J_1(0)=J_2(0)=0,\ \partial_r J_1(0) = e_1,\ \partial_r J_2(0) = e_2.
\ee
The vector fields $J_k$ generalize the generators of rotations in symmetric spaces.

Then
$$
|dA(r)| = |J_1(r) \wedge J_2(r)| \dd \omega.
$$
Since $J_k(0)$ and $\partial_r J_k(0)$ are orthogonal to $e_3$, the same is true for all $r$ (see for example \cite{Lee}), so $J_1$ and $J_2$ are always in the plane spanned by $e_1$ and $e_2$.

Let $T$ be the linear transformation that takes $e_k$ to $J_k$, $1 \leq k \leq 2$, in the subspace of the tangent space $T_x \mc M$ orthogonal to $e_3$. Written in coordinates, in the basis $\{e_1, e_2\}$, this is a $2 \times 2$ matrix
\be\lb{t}
T = \begin{pmatrix} j_{11} & j_{21} \\ j_{12} & j_{22} \end{pmatrix},\ j_{k\ell} = J_k \cdot e_\ell, 1 \leq k, \ell \leq 2,
\ee
such that $J_1 \wedge J_2 = \det T$.

Since $T$ is scalar, $\nabla_{e_k} T$, $\nabla_{J_k} T$, or $\partial_r T$ mean the action of vector fields on (the entries of) $T$.

Let $a=J_1 \wedge J_2 = \det T$; then the area element of $\partial B(x_0, r)$ is $dA=a \dd\omega$. Any unimodular basis change applied to $\{e_1, e_2\}$ leads to the same for $\{J_1, J_2\}$ and to conjugation for the matrix $T$. In particular, $a=J_1 \wedge J_2$ does not depend on any particular choice of orthonormal basis for the initial conditions (\ref{jacobi2}).

Write both the Riemann tensor and the Ricci tensor in coordinates,
$$
\Rm_{k\ell mn} = \Rm(e_k, e_\ell, e_m, e_n) = [\Rm(e_k, e_\ell) e_m] \cdot e_n,\ \Ric_{k \ell} = \Ric(e_k, e_\ell),
$$
where $\{e_1, e_2, e_3\}$ is the orthonormal basis defined above by parallel transport.

In the constant curvature case, when written in coordinates using an orthonormal basis
$$
\Ric_0 = 2\kappa_0 g = 2\kappa_0 I,\ R_0 = 6 \kappa_0.
$$

In three space dimensions the Ricci tensor completely determines the Riemann curvature tensor. Condition (\ref{rie}) and others can be equivalently expressed in terms of the Ricci tensor, for example
$$
\int_\R |\Ric(\gamma(s)) - \Ric_0| \dd s \les \epsilon << 1,
$$
up to a constant factor.

Let $\mc A$ be the linear transformation
$$
\mc A J = \Rm(J, e_3) e_3
$$
that appears in Jacobi's equation (\ref{jacobi1}). Written in coordinates, $\mc A$ is the $2 \times 2$ symmetric matrix
$$
\mc A_{k \ell}=(\Rm_{k33\ell})_{1 \leq k, \ell \leq 2}.
$$
Then the equation of $T$ is
\be\lb{eqt}
\partial^2_r T + \mc A(r, \omega) T = 0, T(r=0)=0, \partial_r T(r=0) = I.
\ee
If we likewise set $\mc A_0 = (\Rm_0)_{k33\ell}$, then $\mc A_0=\kappa_0 I$ and $|\mc A-\mc A_0| \leq |\Rm-\Rm_0|$ (or $|\mc A-\mc A_0| \les |\Ric-\Ric_0|$, where the constant is $3/2$). For convenience, let $\mc A_1=\mc A  - \mc A_0$.

For $k \ne \ell$, $\mc A_{k \ell} = \Ric_{k \ell}$, while
$$
\mc A_{11}=\frac 1 2(\Ric_{11}+\Ric_{33}-\Ric_{22}) = \frac R 2 - \Ric_{22}
$$
and likewise for $\mc A_{22}$. In particular, $\tr \mc A = \Ric_{33}$.

By this and similar algebraic identities, it follows that, pointwise, sectional curvature is uniformly close to some value $\kappa(x)$ for all tangent planes in $T_x \mc M$ if and only if the Ricci tensor is close to a scalar multiple of the identity (i.e.~of the metric) $2\kappa(x) I = 2 \kappa(x) g$. Thus, all possible ways in which we can state condition (\ref{rie}) are equivalent.


\subsection{Notations}
This paper employs the following notations:
\begin{itemize}
\item $a \les b$ means $|a| \leq C |b|$ for some constant $C$
\item $C^\infty_c(D)=\mc D(D)$ is the set of test functions (smooth compactly supported functions) on $D$
\item $L^p$ are Lebesgue spaces, $\dot W^{s, p}$ are homogeneous Sobolev spaces
\item $\delta_{x_0}(x)$ is the Dirac delta at $x_0$, a pure point measure
\item $\K$ is the Kato space (\ref{kato}) and $\tilde \K$ is the modified Kato space (\ref{modk})
\item $\mc M$ is a Riemannian manifold
\item $\S^d$ is the $d$-dimensional sphere, $\H^d$ is the $d$-dimensional hyperbolic space
\item $\Rm$ is the Riemann curvature tensor and $\Ric$ is the Ricci tensor
\item $\Gamma$ is the set of all geodesics on the manifold $\mc M$
\item $x_0$ is an arbitrary fixed point on $\mc M$ and $r$ is the geodesic distance from $x$ to $x_0$
\item $L^1(\Gamma)=\{f \mid \sup_{\gamma \in \Gamma} \|f\|_{L^1(\gamma)} < \infty\}$
\end{itemize}

\section{Constant sectional curvature}
\subsection{The sine propagator}
By Huygens's principle, in odd space dimensions $\R^d$, except $d=1$, the propagator for the free wave equation is supported on the surface of a sphere of radius $|t|$ around the source at time $t$. The sine propagator in $\R^3$ is given by
$$
\frac {\sin(t\sqrt{-\Delta})}{\sqrt{-\Delta}} = \frac 1 {4\pi t} \delta_{|x-y|=|t|}.
$$
Surprisingly perhaps, the same is true on $\S^3$ and $\H^3$, following appropriate adjustments.
\begin{proposition}\lb{prop1} Consider a simply connected three-dimensional Riemannian manifold $(\mc M, g)$ of constant sectional curvature $\kappa_0$. Then the sine propagator for the free wave equation (\ref{wave}) is
$$
\frac {\sin(t \sqrt{-\Delta+\kappa_0})}{\sqrt{-\Delta+\kappa_0}}(x_0, x) = \frac 1 {4\pi j(t)} \delta_{d(x_0, x)=|t|},
$$
with $j$ given by (\ref{j}).
\end{proposition}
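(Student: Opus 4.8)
The plan is to exploit the fact that a simply connected space form is isotropic about $x_0$, so that after conjugating by multiplication by $j(r)$ the shifted wave equation on radial functions becomes the one--dimensional wave equation, where d'Alembert's formula applies directly.

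First I would set up geodesic polar coordinates about $x_0$. Since $\Rm=\Rm_0$ as in (\ref{rm0}), equation (\ref{eqt}) reads $\partial_r^2 T+\kappa_0 T=0$, $T(0)=0$, $\partial_r T(0)=I$, so $T(r)=j(r)I$ with $j$ the solution of $j''+\kappa_0 j=0$, $j(0)=0$, $j'(0)=1$ appearing in (\ref{j}); hence $a=\det T=j(r)^2$, the volume element is $dV=j(r)^2\dd r\dd\omega$, and $\partial B(x_0,r)$ carries the area element $dS=j(r)^2\dd\omega$. The key computation is then the conjugation identity: for radial $u=u(r)$,
\[
(-\Delta+\kappa_0)u=-\Big(u''+\tfrac{2j'}{j}u'\Big)+\kappa_0 u=-\frac1{j}\,\partial_r^2\big(j(r)u\big),
\]
where the curvature term is cancelled exactly by the shift $\kappa_0$ --- this is where the $\kappa_0$ correction is essential.

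Next I would reduce to radial data. The propagator $\frac{\sin(t\sqrt{H_0})}{\sqrt{H_0}}$ commutes with pullback by any isometry of $\mc M$, and the stabilizer of $x_0$ in the isometry group acts transitively on the unit sphere $\S^2\subset T_{x_0}\mc M$; averaging over that stabilizer shows that $\big(\tfrac{\sin(t\sqrt{H_0})}{\sqrt{H_0}}f\big)(x_0)=\big(\tfrac{\sin(t\sqrt{H_0})}{\sqrt{H_0}}\,\ov f\big)(x_0)$, where $\ov f$ is the radial function with $\ov f(r)=\tfrac1{4\pi}\int_{\S^2}f(\exp_{x_0}(r\omega))\dd\omega=\tfrac1{4\pi j(r)^2}\int_{\partial B(x_0,r)}f\dd S$. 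So it suffices to compute $w(t,x)=\big(\tfrac{\sin(t\sqrt{H_0})}{\sqrt{H_0}}f\big)(x)$ for radial $f$, which solves (\ref{wave}) with $H=H_0$, $u_0=0$, $u_1=f$; by uniqueness $w(t,\cdot)$ is radial. Setting $\tilde w(t,r)=j(r)w(t,r)$, the conjugation identity turns this into $\partial_t^2\tilde w-\partial_r^2\tilde w=0$ for $r>0$ with $\tilde w(t,0)=0$, $\tilde w(0,\cdot)=0$, $\partial_t\tilde w(0,r)=j(r)f(r)$; solving this half--line Dirichlet problem by odd reflection and d'Alembert's formula gives, for $0<r<t$,
\[
w(t,r)=\frac1{2j(r)}\int_{t-r}^{t+r}j(s)f(s)\dd s,
\]
and letting $r\to0^+$ (using $j(r)=r+O(r^3)$) yields $w(t,x_0)=j(t)\,\ov f(t)=\tfrac1{4\pi j(t)}\int_{\partial B(x_0,t)}f\dd S$ for $t>0$. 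Since both sides are odd in $t$ (and $j$ is odd), this gives the kernel $\tfrac1{4\pi j(t)}\delta_{d(x_0,x)=|t|}$; the base point $x_0$ being arbitrary, the identity follows in general.

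I expect the conjugation step, together with its bookkeeping, to be the only genuinely delicate point: one must verify that multiplication by $j$ intertwines $-\Delta+\kappa_0$ on radial functions with $-\partial_r^2$ on the half--line \emph{including} the Dirichlet condition at $r=0$, and that the limit $r\to0^+$ is taken consistently with $dV=j^2\dd r\dd\omega$. On $\S^3$ one should also note that the statement is meant for $t$ with $j(t)\ne0$, i.e.\ before conjugate points appear, or interpreted distributionally via reflection at the antipode; for such $t$ the argument is unchanged. Everything else is classical d'Alembert/Kirchhoff computation.
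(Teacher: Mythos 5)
Your proof is correct and rests on the same key computation as the paper's: that in geodesic polar coordinates $\Delta r = 2j'/j$ and the shift $\kappa_0$ exactly cancels $j''/j$, so that $(-\Delta+\kappa_0)$ is intertwined with $-\partial_r^2$ by multiplication by $j$ on radial functions. You package this via radialization (averaging over the stabilizer of $x_0$), odd reflection and d'Alembert on the half-line, and the limit $r\to 0^+$, while the paper instead plugs the equivalent traveling-wave ansatz $u=\chi(r-t)/j(r)$ directly into (\ref{freemod}), verifies it solves the equation exactly, and mollifies $\chi$ to show the initial data converge to $(0,\delta_{x_0})$ — so the two arguments are really the same descent to the one-dimensional wave equation, with your version making the conjugation explicit and the paper's avoiding the rotation-averaging step.
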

\begin{proof}
Consider a simply connected three-dimensional Riemannian manifold $(\mc M, g)$, which for now is either $\S^3$, $\R^3$, or $\H^3$. On this manifold, consider solutions of the wave equation (\ref{wave}) of the form
\be\lb{ff}
u(x, t) = f(r) \chi(r-t)
\ee
where $\chi$ is any smooth cutoff function and $f$ depends on the metric. In the flat metric case, $\ds f(r)=\frac 1 {r}$.


Let $x_0 \in \mc M$ be an arbitrary point on the manifold and let $r=d(x_0, x)$ be the geodesic distance from $x_0$. Recall $|\nabla r|=1$ and $\nabla r$ is normal to $\partial B(x_0, r)$. Then
$$
\partial^2_t u = f(r) \chi''(r-t), \nabla u = (f'(r) \chi(r-t) + f(r) \chi'(r-t)) \nabla r,
$$
and
\be\lb{delta1}
\Delta u = (f'(r) \chi(r-t) + f(r) \chi'(r-t)) \Delta r + [f''(r) \chi(r-t) + 2 f'(r) \chi'(r-t) + f(r) \chi''(r-t)]. 
\ee
In order for $u$ to be a solution of the wave equation (\ref{wave}), all terms except $f(r) \chi''(r-t)$ must cancel (or add up to a scalar multiple of $u$, see below).

Note that $\Delta r = \div \nabla r$ is the infinitesimal amount by which the area element of $\partial B(x_0, r)$ changes:
\be\lb{divergence}
\Delta r = \div \nabla r = \frac {\partial_r dA}{dA}.
\ee


For manifolds with constant sectional curvature, the curvature tensor, $\Rm_0$, is given by (\ref{rm0}), so
$$
\Rm_0(J, e_3) e_3 = \kappa_0 [|e_3|^2 J - (J \cdot e_3) e_3] = \kappa_0 J.
$$
Therefore the Jacobi equation has a particularly simple form, $\partial^2_r J + \kappa_0 J = 0$, and
in all three situations $J_k(r)=j(r)e_k$, where $j$ is the solution of the scalar Jacobi equation
$$
j''+\kappa_0 j =0,\ j(0)=0,\ j'(0)=1.
$$
In fact
\be\lb{j}
j(r) = \left\{\begin{aligned}
&\frac {\sin(r \sqrt{\kappa_0})}{\sqrt {\kappa_0}}, &&\kappa_0>0,\\
&\frac {\sinh(r \sqrt {-\kappa_0})}{\sqrt{-\kappa_0}}, &&\kappa_0<0,\\
&r, &&\kappa_0=0.
\end{aligned}\right.
\ee
Hence the area element of $\partial B(x_0, r)$ is $dA=a \dd \omega$, where $a=j^2(r)$,
and by (\ref{divergence})
$$
\Delta r = \frac {2j'(r)}{j(r)}.
$$
%
The result does not depend on $\omega \in \S^2 \subset T_{x_0} \mc M$, the direction of the geodesic.

We now impose the condition $f(r) \Delta r + 2f'(r) = 0$, so that the coefficient of $\chi'(r-t)$ in (\ref{delta1}) is $0$. Then
$$
\frac {f'(r)}{f(r)} = -\frac 1 2 \Delta r = -\frac {j'(r)}{j(r)},
$$
hence
$$
f(r)=\frac c {j(r)}.
$$
Choose $c=1$ and $\ds f(r)=\frac 1 {j(r)}$. Then the coefficient of $\chi(r-t)$ in (\ref{delta1}) is
$$
f'(r) \Delta r + f''(r) = - \frac {j'}{j^2} \frac {2j'} j + \frac {2(j')^2}{j^3} - \frac {j''}{j^2} = - \frac {j''}{j^2} = \frac {\kappa_0} j = \kappa_0 f.
$$
Consequently, for this choice of $f$ and arbitrary $\chi$, $u(x, t)=f(r)\chi(r-t)$ (\ref{ff}) solves the free modified wave equation
\be\lb{freemod}
\partial^2_t u - (\Delta+\kappa_0) u = 0.
\ee

Next, we construct a family of approximations for the sine propagator. Choose $\chi \in C^\infty_c(0, \infty)$ such that $\chi(0)=0$, $\chi(r)=0$ for $r \geq 2$, and $\int_0^\infty \chi(r) \dd r = 1$.

Consider the solutions $u_\delta=f(r)\chi_\delta(r-t)$ corresponding to $\chi_\delta(r)=\delta^{-1} \chi(r/\delta)$ and $\delta>0$. Letting $\delta$ approach $0$, the initial data $(u_\delta(0), \partial_t u_\delta(0))$ are supported inside a small geodesic ball $B(x_0, 2\delta)$, which is diffeomorphic to a ball in the tangent space $T_{x_0}\mc M$, with the diffeomorphism (the exponential map) approaching an isometry as $\delta \to 0$.

Hence as $\delta$ approaches $0$, $u_\delta(0)$ approaches $0$ in $L^1$ since
$$
\int_{\R^3} \delta^{-1} \chi(r/\delta) \dd x = \delta^{-1} \int_0^\infty \chi(r/\delta) r^2 \dd r = O(\delta^2)
$$
and $\ds \frac 1 {4\pi} \partial_t u_\delta(0) = -\frac 1 {4\pi} \delta^{-2} \chi'(r/\delta)/r$ is an approximation for $\delta_{x_0}$. Indeed
\be\lb{approx}
-\frac 1 {4\pi} \int_{\R^3} \delta^{-2} \chi'(r/\delta)/r \dd x = -\delta^{-2} \int_0^\infty \chi'(r/\delta) r \dd r = -\int_0^\infty \chi'(r) r \dd r = \int_0^\infty \chi(r) \dd r = 1
\ee
implies that $\ds \frac 1 {4\pi} \partial_t u_\delta(0)$ is an approximation to the identity as $\delta \to 0$.

In the limit, the initial data $\ds \frac 1 {4\pi}(u_\delta(0), \partial_t u_\delta(0))$ approach $(0, \delta_{x_0})$. Since $\delta^{-1}\chi((r-t)/\delta) \to \delta_r(t)$ when $\delta \to 0$, for $t>0$ the sine propagator for the free modified wave equation (\ref{freemod}) is given by
$$
S(t)(x, x_0) = \frac 1 {4\pi} f(t) \delta_{d(x_0, x)=t} = \frac 1 {4\pi j(t)} \delta_{d(x_0, x)=t}
$$
and this extends by parity to $t<0$.
\end{proof}

\subsection{Other computations for constant sectional curvature}
These computations are for future reference. We shall redo them more generally in the non-constant scalar curvature case, where we know to expect similar results.

We compute $\nabla_{e_k} e_\ell$ for $1 \leq k, \ell \leq 3$. By parallel transport $\nabla_{e_3} e_\ell = 0$ for $1 \leq \ell \leq 3$. Using the definition of the Jacobi fields with families of geodesics, it follows that $[e_3, J_k]=0$ for $1 \leq k \leq 2$, so $\nabla_{J_k} e_3 = \nabla_{e_3} J_k = j'(r) e_k$. Consequently $\ds\nabla_{e_k} e_3 = \frac {j'(r)}{j(r)} e_k$ and
$$
[e_k, e_3] = \nabla_{e_k} e_3 - \nabla_{e_3} e_k = \frac {j'(r)}{j(r)} e_k
$$
as well.

By the definition of curvature, since $[e_3, J_k]=0$
$$
\nabla_{e_3} \nabla_{J_k} e_\ell - \nabla_{J_k} \nabla_{e_3} e_k = \Rm(e_3, J_k) e_\ell,
$$
which becomes, for constant sectional curvature,
$$
\partial_r [\nabla_{J_k} e_\ell] = -\delta_k^\ell j''(r) e_3
$$
so, since in fact $[\nabla_{J_k} e_\ell](x_0)=-\delta_k^\ell e_3$,
$$
\nabla_{e_k} e_\ell = -\delta_k^\ell \frac {j'(r)}{j(r)} e_3.
$$
Note that $\nabla_{e_k} J_\ell = \nabla_{J_k} e_\ell = -\delta_k^\ell j'(r) e_3$
for $1 \leq k \leq 2$.

We need the magnitude of $J_1 \wedge J_2$, for which $\nabla_{e_k} (J_1 \wedge J_2) = 0$ (it is constant on $\partial B(x_0, r)$), not the orientation. The magnitude is determined by the matrix $T$
(\ref{t}):
$$
J_1 \wedge J_2 = (J_1 \cdot e_1) (J_2 \cdot e_2) - (J_1 \cdot e_2) (J_2 \cdot e_1) = j_{11} j_{22} - j_{12} j_{21} =  \det T,
$$
where $\nabla_{e_k} T = 0$, for $1 \leq k \leq 2$, since $T$ is constant on $\partial B(x_0, r)$.


Also
$$
\div e_1 = (\nabla_{e_1} e_1) \cdot e_1 + (\nabla_{e_2} e_1) \cdot e_2 + (
\nabla_{e_3} e_1) \cdot e_3 = 0
$$
and same for $e_2$, while
$$
\div e_3 = (\nabla_{e_1} e_3) \cdot e_1 + (\nabla_{e_2} e_3) \cdot e_2 + (\nabla_{e_3} e_3) \cdot e_3 = \frac {2j'(r)}{j(r)}.
$$

Finally, for $1 \leq k, \ell \leq 2$,
$$
\ds\nabla^2_{e_k} e_\ell = \nabla_{e_k} (\nabla_{e_k} e_\ell) - \nabla_{\nabla_{e_k} e_k} e_\ell = \nabla_{e_k} (-\delta_k^\ell \frac {j'(r)}{j(r)} e_3) - \nabla_{-\frac {j'(r)}{j(r)} e_3} e_\ell= - \delta_k^\ell \frac {(j'(r))^2}{j^2(r)} e_k
$$
and
$$
\ds\nabla^2_{e_k} e_3 = \nabla_{e_k} (\nabla_{e_k} e_3) - \nabla_{\nabla_{e_k} e_k} e_3 = \nabla_{e_k} (\frac {j'(r)}{j(r)} e_k) - \nabla_{-\frac {j'(r)}{j(r)} e_3} e_3 = - \frac {(j'(r))^2}{j^2(r)} e_3.
$$

\section{Non-constant negative sectional curvature}
\subsection{Integrability along geodesics}
Consider a three-dimensional Riemannian manifold $\mc M$. Assume that the metric is a small perturbation of a constant negative sectional curvature metric, in the sense that along any geodesic $\gamma$ on $\mc M$
\be\lb{rie}
\|\Rm_1\|_{L^1(\gamma)} = \|\Rm-\Rm_0\|_{L^1(\gamma)} = \int_\R |\Rm_{\gamma(s)}-\Rm_0| \dd s < \epsilon << 1,
\ee
in other words $\|\Rm-\Rm_0\|_{L^1(\Gamma)} < \epsilon$, where
$$
\Rm_0(v_1, v_2)v_3 = \kappa_0[(v_2 \cdot v_3) v_1 - (v_1 \cdot v_3) v_2]
$$
is the Riemann curvature tensor corresponding to constant sectional curvature $\kappa_0 < 0$, see (\ref{rm0}). Let
$$
\mc A = (\Rm_{k33\ell})_{1 \leq k, \ell \leq 2},\ \mc A_0 =  \kappa_0 I,\ \mc A - \mc A_0 = \mc A_1.
$$

We keep the following notations: an arbitrary $x_0 \in \mc M$ is the origin, $r=d(x_0, x)$, $\{e_1, e_2, e_3\}$ is an orthonormal basis first defined for $T_{x_0} \mc M$, such that $e_3$ is tangent to the geodesic $x_0 x$, then extended by parallel transport along each geodesic $x_0 x$, starting at $x_0$ and reaching to any $x \in \mc M$.

It is not necessary for $e_1$ and $e_2$ to be defined at once for all $\omega \in \S^2$; it suffices to define them on a finite number of patches that constitute a covering of $\S^2$.

The vector fields $J_k$, $1 \leq k \leq 2$, are the two Jacobi vector fields defined in (\ref{jacobi1}) and (\ref{jacobi2}). For comparison purposes, we also retain the fundamental solution
\be\lb{jneg}
j(r) = \frac {\sinh(r \alpha_0)}{\alpha_0}
\ee
from the constant curvature case (\ref{j}), where recall $\alpha_0=\sqrt{-\kappa_0}$.

The first result is an example of what can be achieved in this setting. In addition to (\ref{rie}), we also assume that
\be\lb{rie1}
\|\nabla \Rm\|_{L^1(\Gamma)} = \sup_{\gamma \in \Gamma} \int_\R |\nabla \Rm_{\gamma(s)}| \dd s < \epsilon,
\ee
where the supremum is taken over all geodesics $\gamma \in \Gamma$ on $\mc M$, and
\be\lb{rie2}
\|\nabla^2 \Rm\|_{L^1(\Gamma)} = \sup_{\gamma \in \Gamma} \int_\R |\nabla^2 \Rm_{\gamma(s)}| \dd s < \epsilon.
\ee

Similar assumptions of integrability along the flow (in their case the Hamilton flow, in the current paper the geodesic flow) also appear in \cite{MaMeTa1} and \cite{MetTat}.

Assumptions (\ref{rie}) and (\ref{rie1}) imply that sectional curvature on the manifold $\mc M$ is negative.

\begin{proposition}\lb{prop2}
Consider a simply connected three-dimensional Riemannian manifold $(\mc M, g)$ and suppose that the curvature tensor $\Rm$ satisfies (\ref{rie}), (\ref{rie1}), and (\ref{rie2}). If $\epsilon<<1$, then the sine propagator for the modified wave equation (\ref{wave}) fulfills the estimate
$$
\sup_{x_0 \in \mc M} \bigg\|\frac {\sin(t \sqrt {-\Delta+\kappa_0})}{\sqrt{-\Delta+\kappa_0}}(x, x_0)\bigg\|_{L^1_x} \les e^{c\epsilon t} j(t).
$$
\end{proposition}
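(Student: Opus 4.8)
The plan is to build the sine propagator for the perturbed metric by solving the wave equation along geodesics emanating from $x_0$, exactly as in Proposition~\ref{prop1}, except that now the Jacobi matrix $T=T(r,\omega)$ solving (\ref{eqt}) is no longer $j(r)I$. First I would fix $x_0$ and $\omega\in\S^2$ and study the scalar function $a(r,\omega)=\det T(r,\omega)$, which gives the area element $dA=a\dd\omega$ of $\partial B(x_0,r)$. The natural ansatz for a unit-speed outgoing wave is $u(x,t)=f(r,\omega)\chi(r-t)$ where $f$ is chosen so that the coefficient of $\chi'$ in the analogue of (\ref{delta1}) vanishes, i.e. $2f'+f\,\Delta r=0$; since $\Delta r=\partial_r a/a$ by (\ref{divergence}), this forces $f(r,\omega)=a(r,\omega)^{-1/2}$. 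Unlike the constant-curvature case, plugging this back in does not leave a clean $\kappa_0 u$ remainder: there is an error term $\Error(r,\omega)\chi(r-t)$, where $\Error = f''+f'\Delta r - \kappa_0 f$, which by the computation of $f$ equals $-\big(\tfrac{(a^{1/2})''}{a^{1/2}}+\kappa_0\big)f$, a curvature-type quantity that I expect to be $L^1$-small along each geodesic because of (\ref{rie})–(\ref{rie2}). So $u=f(r)\chi(r-t)$ solves $\partial_t^2 u-(\Delta+\kappa_0)u = \Error\cdot\chi(r-t)$, a perturbed version of (\ref{freemod}).

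Next I would run the Duhamel/approximation argument of Proposition~\ref{prop1}: take $\chi_\delta(r)=\delta^{-1}\chi(r/\delta)$, check (using that $\exp_{x_0}$ is close to an isometry near $x_0$, so the volume element $a(r,\omega)\to r^2$ and $f\to 1/r$ as $r\to0$) that $\tfrac1{4\pi}(u_\delta(0),\partial_t u_\delta(0))\to(0,\delta_{x_0})$, and conclude that the true sine propagator $S(t)(x,x_0)$ equals the ``geometric optics'' term $\tfrac1{4\pi}f(t,\omega)\delta_{d(x_0,x)=t}$ \emph{plus} a correction coming from the Error term via Duhamel:
\[
S(t)(x,x_0)=\frac1{4\pi f(t,\omega)}\,\delta_{d(x_0,x)=t}\cdot(\text{density factor})\ -\ \int_0^t S_0(t-s)\,\big[\Error\cdot(\text{leading wave at time }s)\big]\dd s+\cdots,
\]
organized as a Neumann/iteration series. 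The cleanest bookkeeping is to write $S(t)=\sum_{n\ge0}S_n(t)$ where $S_0$ is the principal term $\tfrac1{4\pi}a(t,\omega)^{-1/2}\delta_{d=t}$ and $S_{n+1}(t)=\int_0^t S_0(t-s)\,\mc E\,S_n(s)\dd s$ with $\mc E$ multiplication by the Error potential; here the finite speed of propagation keeps all integrals over compact geodesic segments.

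The two quantitative inputs I need are: (i) a pointwise two-sided bound $a(r,\omega)^{1/2}\sim j(r)$, equivalently $|a(r,\omega)^{1/2}-j(r)|\les\epsilon j(r)$, which follows by treating (\ref{eqt}) as a perturbation $\partial_r^2 T+\kappa_0 T=-\mc A_1 T$ of the scalar equation $j''+\kappa_0 j=0$, writing the integral (variation-of-constants) equation with kernel built from $j$, and running a Gronwall/contraction argument using $\int|\mc A_1|\les\int|\Rm_1|<\epsilon$ along $\gamma$; and (ii) the resulting $L^1_x$ bound for the leading term, $\big\|\tfrac1{4\pi}a(t,\omega)^{-1/2}\delta_{d(x_0,x)=t}\big\|_{L^1_x}=\tfrac1{4\pi}\int_{\S^2}a(t,\omega)^{-1/2}a(t,\omega)\dd\omega=\tfrac1{4\pi}\int_{\S^2}a(t,\omega)^{1/2}\dd\omega\les j(t)$. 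For the iterated terms $S_n$, each application of $\mc E$ costs a factor $\int_\gamma|\Error|\les\epsilon$ in $L^1(\Gamma)$-type norms, while each $S_0$ roughly preserves the $e^{c\epsilon t}j(t)$ profile up to an extra $e^{c t}$-type growth controlled by $j$; summing the geometric series in $\epsilon$ then gives $\|S(t)(\cdot,x_0)\|_{L^1}\les e^{c\epsilon t}j(t)$ uniformly in $x_0$.

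The main obstacle, and where I would spend the most care, is controlling the Duhamel iteration: one must show the Error term is genuinely an integrable perturbation so that the Neumann series converges, and one must track how the $L^1_x$ norm of a piece of the form $S_0(t-s)[\text{potential}\cdot w]$ compares to that of $w$ when the potential is only in $L^1(\Gamma)$ (not $L^\infty$) — this is exactly the place where the hypotheses (\ref{rie1}) and (\ref{rie2}) on $\nabla\Rm,\nabla^2\Rm$ enter, since $\Error$ involves second $r$-derivatives of $a^{1/2}$ and hence of $T$, which by differentiating (\ref{eqt}) brings in $\nabla\mc A$ and $\nabla^2\mc A$. Establishing the uniform-in-$x_0$, uniform-in-$\omega$ bound $|a(r,\omega)^{1/2}-j(r)|\les\epsilon\, j(r)$ together with analogous bounds on $\partial_r a^{1/2}$ is the technical heart; once that and the integrability of $\Error$ along geodesics are in hand, the rest is a standard convergent-series estimate producing the stated exponential-in-$\epsilon t$ loss over the constant-curvature profile $j(t)$.
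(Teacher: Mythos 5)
Your high-level plan matches the paper's proof exactly: ansatz $u=f\,\chi(r-t)$ with $f=a^{-1/2}$, $a=\det T$, the contraction argument on the integral form of Jacobi's equation giving $|a-j^2|\les\epsilon j^2$, the $L^1_x$ bound $\|S_0(t)\|_{L^1_x}\les j(t)$, Duhamel iteration, and geometric-series summation yielding the $e^{c\epsilon t}j(t)$ bound. The structure of the argument is correct and this is the route the paper takes.

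There is, however, a concrete gap in your identification of the error term. You write $\Error = f''+f'\Delta r - \kappa_0 f = -\bigl(\tfrac{(a^{1/2})''}{a^{1/2}}+\kappa_0\bigr)f$, which treats $f$ as purely radial. But $f=a(r,\omega)^{-1/2}$ genuinely depends on $\omega$ once curvature is non-constant, and the correct error coefficient of $\chi(r-t)$ is the full Laplacian $\Delta f-\kappa_0 f$. In the orthonormal frame this contains, besides the radial part $\nabla_{e_3}\nabla_{e_3}a$ and $(\div e_3)\nabla_{e_3}a$ that your expression captures, the transverse contributions $\nabla_{e_1}\nabla_{e_1}a+\nabla_{e_2}\nabla_{e_2}a$ and $(\div e_1)\nabla_{e_1}a+(\div e_2)\nabla_{e_2}a$. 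Those are not negligible: controlling them is where the bulk of the paper's technical work happens (bounding $\nabla_{e_m}T$, $\nabla^2_{e_m e_n}T$, $\nabla_{e_k}e_\ell$, $\div e_1$, $\div e_2$, via the covariantly differentiated Jacobi equations (\ref{one_derivative}) and (\ref{eq_double})). This also means your explanation of why (\ref{rie1})–(\ref{rie2}) are needed is misplaced: you attribute it to ``second $r$-derivatives of $a^{1/2}$,'' but $\partial_r^2 T=-\mc A T$ involves only $\mc A$ and no derivatives of $\Rm$. It is rather the \emph{transverse} differentiation of the Jacobi equation — $\nabla_{J_m}$ and $\nabla_{J_m}\nabla_{J_n}$ applied to (\ref{eqt}) — that produces forcing terms $(\nabla_{J_m}\mc A)T$ and $(\nabla^2_{J_mJ_n}\mc A)T$, and it is these that require $\nabla\Rm,\nabla^2\Rm\in L^1(\Gamma)$. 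Without confronting the angular derivatives of $a$, your Neumann series cannot be closed, because the $O(\epsilon/j(r))$ bound on $\Error$ that powers the iteration cannot be established from radial information alone.
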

The sine propagator is supported on the ball $B(x_0, t)$ at time $t>0$. The volume of $B(x_0, r)$ is of size, up to a constant factor,
$$
\int_0^r j^2(s) \dd s \sim \sinh(2\alpha_0 r) - 2\alpha_0 r.
$$
Thus, the above estimate implies the propagator's decay in average, as its average over $B(x_0, t)$ is of size roughly $\ds\frac {e^{c\epsilon t}}{j(t)} \to 0$ as $t \to \infty$.
\begin{proof}
Fix $x_0 \in \mc M$ and consider solutions of (\ref{wave}) the form
\be\lb{fx}
u(x, t)=f(x) \chi(r-t),
\ee
where $r=d(x_0, x)$. When sectional curvature is not constant, one cannot choose $f$ to be radially symmetric, unlike in (\ref{ff}). Then
$$
\partial_t^2 u = f(x) \chi''(r-t),\ \nabla u=(\nabla f(x))\chi(r-t) + f(x) \chi'(r-t) \nabla r,
$$
and
\be\lb{exx}
\Delta u = (\Delta f(x))\chi(r-t) + 2 (\nabla f(x)) \cdot \chi'(r-t) (\nabla r) + f(x) (\chi''(r-t) + \chi'(r-t) \Delta r).
\ee
Here $\nabla f \cdot \nabla r = \partial_r f$. We impose the condition that
$$
2 \partial_r f + (\Delta r) f = 0,
$$
so the terms involving $\chi'$ cancel in (\ref{exx}). Recall that $\ds \Delta r = \frac {\partial_t dA}{dA}$ by (\ref{divergence}) and (\ref{equiv}), so the previous equation means
\be\lb{f}
f=\frac {c(\omega)} {\ds\sqrt {\frac {dA}{d\omega}}}.
\ee
Using Jacobi fields, $dA=a \dd \omega$, where $a=J_1 \wedge J_2=\det T$, see (\ref{t}). We again take $c(\omega)=1$. However, in this case $dA$ depends on $x$ through not just $r$, but also $\omega$: $dA$ may take different values along different geodesics starting from $x_0$.

Unlike in the constant curvature case, we must keep in mind that $f$ (\ref{f}) only produces an approximate solution, which we can however bootstrap.

We can now prove (\ref{divergence}). Indeed,
\be\lb{equiv}\begin{aligned}
\div e_3 &= (\nabla_{e_1} e_3) \cdot e_1 + (\nabla_{e_2} e_3) \cdot e_2 + (\nabla_{e_3} e_3) \cdot e_3 = (T^{-1} \nabla_{J_1} e_3) \cdot e_1 + (T^{-1} \nabla_{J_2} e_3) \cdot e_2 \\
&=(T^{-1} \nabla_{e_3} J_1) \cdot e_1 + (T^{-1} \nabla_{e_3} J_2) \cdot e_2= (T^{-1} (\partial_r T) e_1) \cdot e_1 + (T^{-1} (\partial_r T) e_2) \cdot e_2 \\
&= \tr [T^{-1} (\partial_r T)] = \frac {\partial_r \det T}{\det T} = \frac {\partial_r a}a.
\end{aligned}\ee
by Jacobi's identity.

The Jacobi equation (\ref{jacobi1}) becomes
\be\lb{jacobi3}
\partial^2_r J - \kappa_0 J = -\Rm_1(J, e_3) e_3 = -\mc A_1 J.
\ee
This is equivalent to
\be\lb{semi}
J(r, \omega) = j'(r) J(0) + j(r) \partial_r J(0) - \int_0^r j(r-s) [\mc A_1 J](s) \dd s.
\ee
Differentiating (\ref{semi}) leads to
\be\lb{cos}
\partial_r J(r, \omega) = j''(r) J(0) + j'(r) \partial_r J(0) - \int_0^r j'(r-s) [\mc A_1 J](s) \dd s.
\ee
In both cases $J_k(0)=0$, so the first term vanishes, by (\ref{jacobi2}).

Note that
$$
0 \leq \sinh(r-s) \sinh s \leq \sinh(r-s) \cosh s = \frac 1 2 (\sinh r + \sinh(r-2s)) \leq \sinh r,
$$
so
$$
\int_0^r j(r-s) [\mc A_1 J](s) \dd s \leq \frac 1 {\alpha_0} j(r) \|\Rm_1\|_{L^1(x_0 x)} \|J\|_{j(r) L^\infty_r}.
$$
This mapping is a contraction when $\|\Rm_1\|_{L^1(x_0x)}<\alpha_0$. If this norm is large, but finite, we can still obtain some bounds by subdividing the geodesic into finitely many segments on which the norm is small, but this is insufficient for the subsequent argument.

Using a contraction argument in the norm $j(r)L^\infty_r$, for sufficiently small $\epsilon$ in (\ref{rie}) we obtain that
\be\lb{first}
|J_k(r, \omega)-j(r)e_k| \les \epsilon j(r),
\ee
which by (\ref{cos}) also implies (note $j$ on the right-hand side, not $j'$)
\be\lb{second}
|\partial_r J_k(r, \omega)-j'(r)e_k| \les \epsilon j(r).
\ee

The same contraction argument applies in general to solutions of Jacobi's equation (\ref{jacobi1}), meaning that the equation has a propagator $k$ such that
\be\lb{propk}
k(t, s) \les j(t-s).
\ee
Due to exponential growth, most of the quantities we compute are of size $e^{\alpha_0 r}$, while quadratic quantities, such as $\det T = J_1 \wedge J_2$, are of size $e^{2\alpha_0 r}$.

An improvement is possible for small $r$: noting that (\ref{semi}) gains two powers of $r$ for small $r$ over the forcing term, while (\ref{cos}) also gains one power, for $r<1$
\be\lb{smallr2}
|J_k(r, \omega) - j(r)e_k| \les \epsilon r^3
\ee
and
\be\lb{smallr}
|\partial_r J_k(r, \omega) - j'(r)e_k| \les \epsilon r^2
\ee
as well.

Due to (\ref{jacobi2}), $T(r=0)=0$ and $\partial_r T(r=0) = I$. Plugging (\ref{smallr}) back into the equation (\ref{jacobi1}), to a second approximation, for $r<1$, $T$ is given by
\be\lb{expand}
T = rI + \frac {r^3} 6 \mc A(x_0, \omega) + o(r^3),
\ee
where $\mc A(x_0, \omega) = (\Rm_{k33\ell})_{1 \leq k, \ell \leq 2}(x_0)$ depends on $e_3=\omega$.

By (\ref{first}) and (\ref{second}),
$$
|T-j(r)I| + |\partial_r T - j'(r) I| \les \epsilon j(r),
$$
which also implies that $\ds\bigg|T^{-1} - \frac 1 {j(r)} I\bigg| \les \frac \epsilon {j(r)}$.

Consequently, for $a=J_1 \wedge J_2 = \det T$,
\be\lb{both}
|a(r, \omega) - j^2(r)| \les \epsilon j^2(r),\ |\partial_r a(r, \omega) - 2 j(r) j'(r)| \les \epsilon j(r) j'(r).
\ee

Estimates (\ref{smallr2}) and (\ref{smallr}) produce an improvement for $r<1$:
\be\lb{improv}
|a(r, \omega) - j^2(r)| \les \epsilon r^4,\ |\partial_r a(r, \omega) - 2j(r)j'(r)| \les \epsilon r^3,\ |(\partial_r a(r, \omega))^2 - 4j^2(r)(j'(r))^2| \les \epsilon r^4.
\ee

We now compute $\nabla_{e_k} e_\ell$, for $1 \leq k, \ell \leq 3$. For some of these expressions we only need an upper bound. First, consider the case when $k$ or $\ell$ is $3$.

Due to parallel transport, $\nabla_{e_3} e_k = 0$. Since $[e_3, J_k] = 0$,
$$
\nabla_{J_k} e_3 = \nabla_{e_3} J_k = (\partial_r T) e_k = j'(r) e_k + O(\epsilon j(r)),
$$
so
$$
\nabla_{e_k} e_3 = T^{-1} (\partial_r T) e_k = \frac {j'(r)}{j(r)} e_k + O\bigg(\epsilon \frac {j'(r)}{j(r)}\bigg),
$$
for $1 \leq k \leq 2$, and likewise $[e_k, e_3] = \nabla_{e_k} e_3 - \nabla_{e_3} e_k = T^{-1} \partial_r T e_k$. Taking the covariant derivative
$$
\nabla_{e_k} (e_\ell \cdot e_\ell) = \nabla_{e_k} 1 = 0
$$
shows that
\be\lb{simple}
(\nabla_{e_k} e_\ell) \cdot e_\ell = 0,
\ee
so $\nabla_{e_k} e_3$ is in the plane $T_x \partial B(x_0, r)$ spanned by $e_1$ and $e_2$.

When $1 \leq k, \ell \leq 2$, taking the covariant derivative,
$$
0 = \nabla_{e_k} 0 = \nabla_{e_k} (e_\ell \cdot e_3) = (\nabla_{e_k} e_\ell) \cdot e_3 + e_\ell \cdot (\nabla_{e_k} e_3).
$$
Thus, the $e_3$ components of $\nabla_{e_k} e_\ell$ are
$$
(\nabla_{e_k} e_\ell) \cdot e_3 = -(T^{-1} \partial_r T e_k) \cdot e_\ell = -\delta_k^\ell \frac {j'(r)}{j(r)} + O(\epsilon \frac {j'(r)}{j(r)}).
$$
As in (\ref{simple}), it is always the case that $(\nabla_{e_k} e_\ell) \cdot e_\ell = 0$ and in general the components in both remaining directions can be nonzero.

By the definition of curvature, taking into account that $[e_3, J_k]=0$ and $\nabla_{e_3} e_\ell = 0$, for $1 \leq k, \ell \leq 2$
\be\lb{ref}\begin{aligned}
\partial_r (\nabla_{J_k} e_\ell) &= \nabla_{e_3} \nabla_{J_k} e_\ell = \Rm(e_3, J_k)e_\ell + \nabla_{J_k} \nabla_{e_3} e_\ell = \Rm(e_3, J_k)e_\ell.
\end{aligned}\ee
Below we obtain a more precise expression (\ref{ekl2}), but do not always need such detail.
Since by (\ref{rie}) and (\ref{rie1}) $\Rm \in L^\infty$ (as in one dimension $\dot W^{1, 1} \subset L^\infty$)
$$
\partial_r (\nabla_{J_k} e_\ell) \leq |\Rm| |J_k| \les j(r).
$$
Integrating from $x_0$ to $x$ and noting that the initial condition at $x_0$ is nonzero,
$$
\nabla_{J_k} e_\ell \les j'(r).
$$
Since the covariant derivative is linear in the first argument, by applying $T^{-1}$ to the left we obtain
\be\lb{ekl}
\nabla_{e_k} e_\ell \les \frac {j'(r)}{j(r)}.
\ee

We also need a finer estimate of $\nabla_{e_k} e_\ell$. Start from $\partial_r \nabla_{J_k} e_\ell = \Rm(e_3, J_k) e_\ell$ and
$$
\Rm(e_3, e_k) e_\ell = \delta_k^\ell \kappa_0 e_3 + O(|\Rm_1|),
$$
so $\Rm(e_3, J_k) e_\ell = \kappa_0 j_{k\ell} e_3 + O(|\Rm_1| j(r))$. The initial conditions are $\nabla_{J_k} e_\ell(x_0) = \delta_k^\ell e_3$. Then
$$
\nabla_{J_k} e_\ell = \delta_k^\ell e_3 + \kappa_0 \bigg(\int_0^r j_{k\ell} \dd s\bigg) e_3 + O(\epsilon j(r)).
$$
Consequently the projection to the tangent plane $e_1 e_2$ (orthogonal to $e_3$) fulfills
\be\lb{ekl2}
P_{e_3^\perp} \nabla_{e_k} e_\ell = O(\epsilon).
\ee


When computing $\nabla_{e_m} T$, $1 \leq m \leq 2$, we start by taking the covariant derivative (which in the case of a scalar is just the action of the vector field) in (\ref{eqt}). Since $e_3$ and $J_k$ commute, it is more appropriate to use $J_k$ than $e_k$. Recall $\mc A=(\Rm_{k33\ell})_{1 \leq k, \ell \leq 2}$ and $\mc A_0 = \kappa_0 I$. One has
\be\lb{one_derivative}
\partial^2_r (\nabla_{J_m} T) + \mc A (\nabla_{J_m} T) = -(\nabla_{J_m} \mc A) T.
\ee
Next, we use that $\nabla_{e_m} \mc A_0 = 0$, so $\nabla_{e_m} \mc A = \nabla_{e_m} (\mc A - \mc A_0)$, and compute
$$\begin{aligned}
\nabla_{e_m} \mc A_{k\ell} &= (\nabla_{e_m} \Rm)(e_k, e_3, e_3, e_\ell) + \Rm_1(\nabla_{e_m} e_k, e_3, e_3, e_\ell) + \Rm_1(e_k, \nabla_{e_m} e_3, e_3, e_\ell) + \Rm_1(e_k, e_3, \nabla_{e_m} e_3, e_\ell) + \\
&+ \Rm_1(e_k, e_3, e_3, \nabla_{e_m} e_\ell).
\end{aligned}$$
We want $\nabla_{e_m} \mc A$ to be integrable, not only small, along the geodesic $x_0x$. This will come from two sources: first, by (\ref{rie1}) the integral of $|\nabla \Rm|$ is finite along each geodesic:
$$
\int_\R |\nabla \Rm_{\gamma(s)}| \dd s < \epsilon_1.
$$
This takes care of the first term. The other terms will be small because $\Rm_1$ is small by (\ref{rie}) and we have already bounded $\nabla_{e_k} e_\ell$ by (\ref{ekl}).

Overall
\be\lb{unu}
\nabla_{e_m} \mc A \les |\nabla \Rm| + |\Rm_1| \frac {j'(r)}{j(r)},\ \nabla_{J_m} \mc A \les |\nabla \Rm| j(r) + |\Rm_1| j'(r),
\ee
and
$$
(\nabla_{J_m} \mc A) T \les |\nabla \Rm| j^2(r) + |\Rm_1| j(r)j'(r).
$$

By (\ref{expand})
$$
\nabla_{J_m} T = \frac {r^3} 6 \nabla_{J_m} \mc A(x_0, \omega) + o(r^3),\ \partial_r \nabla_{J_m} T = \frac {r^2} 2 \nabla_{J_m} \mc A(x_0, \omega) + o(r^2).
$$
As $\nabla_{J_m} \mc A(x_0, \omega)$ is of size $1$, the powers in front imply that $\partial_r \nabla_{J_m} T(r=0) = 0$ and $\nabla_{J_m} T(r=0) = 0$.


Solving (\ref{one_derivative}) with these initial conditions leads to
\be\lb{nablat}
\nabla_{J_m} T \les \epsilon j^2(r),\ \nabla_{e_m} T \les \epsilon j(r),\ \nabla_{e_m} \det T \les \epsilon j^2(r).
\ee
We also obtain
\be\lb{partialr}
\partial_r \nabla_{J_m} T \les \epsilon j^2(r).
\ee

Recall $j_{k \ell} = J_k \cdot e_\ell,\ 1 \leq k, \ell \leq 2$. Then
\be\lb{ekjl}
\nabla_{e_k} J_\ell = \sum_{m=1}^2 \nabla_{e_k} (j_{\ell m} e_m) = \sum_{m=1}^2 ((\nabla_{e_k} j_{\ell m}) e_m + j_{\ell m} \nabla_{e_k} e_m) \les j'(r),\ \nabla_{J_k} J_\ell \les j(r) j'(r).
\ee
Due to (\ref{ekl2}) the components in the $e_1e_2$ plane are better, in the sense that $P_{e_3^\perp} \nabla_{e_k} J_\ell \les \epsilon j(r)$.

We next bound $\nabla_{e_k} \nabla_{e_\ell} e_m$ and $\nabla^2_{e_k e_\ell} e_m$, for $1 \leq k, \ell \leq 2$, $1\leq m \leq 3$. Using (\ref{ekl2}), their difference is
$$
\nabla_{\nabla_{e_k} e_\ell} e_3 = \nabla_{O(\epsilon)} e_3 = O\bigg(\epsilon \frac {j'(r))}{j(r)}\bigg)
$$
and is in the $e_1e_2$ plane. Then start from
$$\begin{aligned}
\nabla_{e_k} \nabla_{e_\ell} e_3 &= \nabla_{e_k} [T^{-1} (\partial_r T) e_\ell] \\
&= -T^{-1} (\nabla_{e_k} T) T^{-1} (\partial_r T) e_\ell + T^{-1} (\partial_r \nabla_{e_k} T) e_\ell + T^{-1} (\partial_r T) \nabla_{e_k} e_\ell,
\end{aligned}$$
where, using (\ref{partialr}) and (\ref{ekl}), each term is respectively bounded by:
$$
T^{-1} (\nabla_{e_k} T) T^{-1} (\partial_r T) e_\ell \les \epsilon,\ T^{-1} (\partial_r \nabla_{e_k} T) e_\ell \les \epsilon,\ 
T^{-1} (\partial_r T) \nabla_{e_k} e_\ell \les \frac {(j'(r))^2}{j^2(r)}.
$$
This leads to
$$
\nabla^2_{e_k e_\ell} e_3,\ \nabla_{e_k} \nabla_{e_\ell} e_3 \les \frac {(j'(r))^2}{j^2(r)} \les \frac 1 {j^2(r)} + 1.
$$
In both cases, the terms in the $e_1 e_2$ plane are at most of size $\ds O\bigg(\frac {j'(r)}{j(r)}\bigg)$.

When computing $\nabla^2_{e_k e_\ell} e_m$, $1 \leq k, \ell, m \leq 2$, we start from (\ref{ref}) $\partial_r (\nabla_{J_\ell} e_m) = \Rm(e_3, J_\ell) e_m$, so
$$
\nabla_{J_k} \partial_r (\nabla_{J_\ell} e_m) = \nabla_{J_k} [\Rm(e_3, J_\ell) e_m],
$$
where
$$
\nabla_{e_k} [\Rm(e_3, J_\ell) e_m] = (\nabla_{e_k} \Rm)(e_3, J_\ell) e_m + \Rm(\nabla_{e_k} e_3, J_\ell) e_m + \Rm(e_3, \nabla_{e_k} J_\ell) e_m + \Rm(e_3, J_\ell) \nabla_{e_k} e_m
$$
and $\nabla_{J_k} = T \nabla_{e_k}$. Commuting $\nabla_{J_k}$ and $\nabla_{e_3}$ by means of the curvature tensor and since $[e_3, J_k]=0$,
\be\lb{ref2}
\partial_r \nabla_{J_k} \nabla_{J_\ell} e_m = \Rm(e_3, J_k) \nabla_{J_\ell} e_m + \nabla_{J_k} [\Rm(e_3, J_\ell) e_m]
\ee
We bound each term individually, using (\ref{ekl}) and (\ref{ekjl}):
$$
(\nabla_{e_k} \Rm)(e_3, J_\ell) e_m \les |\nabla \Rm| j(r),\ \Rm(\nabla_{e_k} e_3, J_\ell) e_m \les j'(r),\ \Rm(e_3, \nabla_{e_k} J_\ell) e_m \les j'(r),
$$
and
$$
\Rm(e_3, J_\ell) \nabla_{e_k} e_m \les j'(r).
$$
This means $\nabla_{e_k} [\Rm(e_3, J_\ell) e_m] \les j'(r)$,
hence
$$
\nabla_{J_k} [\Rm(e_3, J_\ell) e_m] \les j(r) j'(r).
$$
Finally,
$$
\Rm(e_3, J_k) \nabla_{J_\ell} e_m \les j(r) j'(r)
$$
as well. Integrating (\ref{ref2}) along the geodesic $x_0x$, note that the initial condition at $x_0$ is of size $1$: $\nabla_{J_\ell} e_m=-\delta_\ell^m e_3$ in the limit at $x_0$, then $\nabla_{J_k} e_3 = e_k$ in the limit at $r=0$. We obtain that
$$
\nabla_{J_k} \nabla_{J_\ell} e_m = -\delta_\ell^m e_k + O(j^2(r)).
$$
Then
$$
\nabla_{\nabla_{J_k} J_\ell} e_m = \nabla_{O(j(r)j'(r))} e_m \les (j'(r))^2
$$
and so
$$
\nabla^2_{J_k J_\ell} e_m = \nabla_{J_k} \nabla_{J_\ell} e_m - \nabla_{\nabla_{J_k} J_\ell} e_m \les (j'(r))^2.
$$
Applying $T^{-1}$ to the left twice, this leads to
$$
\nabla_{e_k e_\ell}^2 e_m \les \frac 1 {j^2(r)} + 1.
$$
As a consequence of (\ref{ekl2})
$$
\nabla_{\nabla_{e_k} e_\ell} e_m = \nabla_{O(\epsilon)} e_m \les \epsilon.
$$
Thus
$$
\nabla_{e_k} \nabla_{e_\ell} e_m \les \frac 1 {j^2(r)} + 1
$$
as well.

To bound $\nabla^2_{e_m} T$, $1 \leq m \leq 2$, taking two covariant derivatives in (\ref{eqt}) and recalling that $J_m$ commutes with $e_3$ we obtain
\be\lb{eq_double}
\partial^2_r (\nabla_{J_m} \nabla_{J_n} T) + \mc A (\nabla_{J_m} \nabla_{J_n} T) = -(\nabla_{J_m} \nabla_{J_n} \mc A) T - (\nabla_{J_m} \mc A) (\nabla_{J_n} T) - (\nabla_{J_n} \mc A) (\nabla_{J_m} T).
\ee

On the right-hand side, using (\ref{unu}) and (\ref{nablat})
$$
(\nabla_{J_m} \mc A) (\nabla_{J_n} T) \les \epsilon(|\nabla \Rm| j^3(r) + |\Rm_1| j^2(r) j'(r)).
$$
This leaves bounding $\nabla_{J_m} \nabla_{J_n} \mc A$. In the same manner as for the first derivative, since $\nabla_{J_m} \nabla_{J_n} \mc A_0 = 0$,
$$
\nabla_{J_m} \nabla_{J_n} \mc A = \nabla_{J_m} \nabla_{J_n} (\mc A-\mc A_0).
$$
Thus we gain a small factor of $|\Rm_1|$, integrable along the geodesic by (\ref{rie}):
$$\begin{aligned}
&\nabla^2_{e_m e_n} \mc A_{k\ell} = (\nabla^2_{e_m e_n} \Rm)(e_k, e_3, e_3, e_\ell) + \\ &+ (\nabla_{e_m} \Rm)(\nabla_{e_n} e_k, e_3, e_3, e_\ell) + (\nabla_{e_m} \Rm)(e_k, \nabla_{e_n} e_3, e_3, e_\ell) + (\nabla_{e_m} \Rm)(e_k, e_3, \nabla_{e_n} e_3, e_\ell) + (\nabla_{e_m} \Rm)(e_k, e_3, e_3, \nabla_{e_n} e_\ell) \\
&+ (\nabla_{e_n} \Rm)(\nabla_{e_m} e_k, e_3, e_3, e_\ell) + (\nabla_{e_n} \Rm)(e_k, \nabla_{e_m} e_3, e_3, e_\ell) + (\nabla_{e_n} \Rm)(e_k, e_3, \nabla_{e_m} e_3, e_\ell) + (\nabla_{e_n} \Rm)(e_k, e_3, e_3, \nabla_{e_m} e_\ell) \\
&+ \Rm_1(\nabla^2_{e_m e_n} e_k, e_3, e_3, e_\ell) + \Rm_1(e_k, \nabla^2_{e_m e_n} e_3, e_3, e_\ell) + \Rm_1(e_k, e_3, \nabla^2_{e_m e_n} e_3, e_\ell) + \Rm_1(e_k, e_3, e_3, \nabla^2_{e_m e_n} e_\ell) \\
&+ \Rm_1(\nabla_{e_m} e_k, \nabla_{e_n} e_3, e_3, e_\ell) + \Rm_1(\nabla_{e_m} e_k, e_3, \nabla_{e_n} e_3, e_\ell) + \Rm_1(\nabla_{e_m} e_k, e_3, e_3, \nabla_{e_n} e_\ell) \\
&+ \Rm_1(e_k, \nabla_{e_m} e_3, \nabla_{e_n} e_3, e_\ell) + \Rm_1(e_k, e_3, \nabla_{e_m} e_3, \nabla_{e_n} e_\ell) + \Rm_1(e_k, \nabla_{e_m} e_3, e_3, \nabla_{e_n} e_\ell) \\
&+ \Rm_1(\nabla_{e_n} e_k, \nabla_{e_m} e_3, e_3, e_\ell) + \Rm_1(\nabla_{e_n} e_k, e_3, \nabla_{e_m} e_3, e_\ell) + \Rm_1(\nabla_{e_n} e_k, e_3, e_3, \nabla_{e_m} e_\ell) \\
&+ \Rm_1(e_k, \nabla_{e_n} e_3, \nabla_{e_m} e_3, e_\ell) + \Rm_1(e_k, e_3, \nabla_{e_n} e_3, \nabla_{e_m} e_\ell) + \Rm_1(e_k, \nabla_{e_n} e_3, e_3, \nabla_{e_m} e_\ell).
\end{aligned}$$
Condition (\ref{rie2}) states that $\nabla^2 \Rm$ should be integrable along any geodesic, in particular along $x_0 x$:
$$
\int_\R |\nabla^2 \Rm_{\gamma(s)}| \dd s < \epsilon_2.
$$
Then the first nine terms can be handled as in (\ref{unu}), leading to a bound of
$$
\les |\nabla^2 \Rm| + |\nabla \Rm| \frac {j'(r)}{j(r)}.
$$
The last sixteen terms fulfill the bound
$$
\les |\Rm_1| (1+\frac 1 {j^2(r)}).
$$
We obtain that
$$
\nabla^2_{e_m e_n} \mc A \les |\nabla^2 \Rm| + |\nabla \Rm| \frac {j'(r)}{j(r)} + |\Rm_1|(1+ \frac 1 {j^2(r)}).
$$
Likewise
$$
\nabla_{\nabla_{e_m} e_n} \mc A = \nabla_{O(\frac {j'(r)}{j(r)})} \mc A \les \frac {j'(r)}{j(r)} (|\nabla \Rm| + |\Rm_1| \frac {j'(r)}{j(r)})
$$
and
$$
\nabla_{\nabla_{J_m} J_n} \mc A = \nabla_{O(j(r)j'(r))} \mc A \les j(r)j'(r) (|\nabla \Rm| + |\Rm_1| \frac {j'(r)}{j(r)}).
$$
Hence
$$
\nabla_{e_m} \nabla_{e_n} \mc A \les |\nabla^2 \Rm| + |\nabla \Rm| \frac {j'(r)}{j(r)} + |\Rm_1|(1+ \frac 1 {j^2(r)})
$$
and
$$
\nabla^2_{J_m J_n} \mc A,\ \nabla_{J_m} \nabla_{J_n} \mc A \les |\nabla^2 \Rm| j^2(r) + |\nabla \Rm| j(r)j'(r) + |\Rm_1| (j'(r))^2
$$
as well. Then
$$
(\nabla^2_{J_m J_n} \mc A) T,\ (\nabla_{J_m} \nabla_{J_n} \mc A) T \les |\nabla^2 \Rm| j^3(r) + |\nabla \Rm| j^2(r)j'(r) + |\Rm_1| j(r) (j'(r))^2.
$$

Near the origin $x_0$, using the expansion (\ref{expand}) again, we also get that
\be\lb{expansion}
\nabla_{J_m} \nabla_{J_n} T = \frac {r^3} 6 \nabla_{J_m} \nabla_{J_n} \mc A(x_0, \omega) + o(r^3),\ \partial_r \nabla_{J_m} \nabla_{J_n} T = \frac {r^2} 2 \nabla_{J_m} \nabla_{J_n} \mc A(x_0, \omega) + o(r^2).
\ee
Thus both of these initial conditions are null at $x_0$.

%

Solving equation (\ref{eq_double}) with null initial conditions and recalling that integrating against (\ref{propk}) results in a gain of $r^2$ over the forcing term for small $r$ results in
$$
\nabla_{J_m} \nabla_{J_n} T \les \epsilon j^3(r).
$$
Consequently
\be\lb{consec}
\nabla_{\nabla_{e_m} J_n} T + \nabla^2_{e_m J_n} T = \nabla_{e_m} (\nabla_{J_n} T) \les \epsilon j^2(r).
\ee
Write
$$
\nabla_{e_m} J_n = \sum_{p=1}^2 \nabla_{e_m} (j_{np} e_p) = (\sum_{p=1}^2 (\nabla_{e_m} j_{np}) e_p) + (\sum_{p=1}^2 j_{np} \nabla_{e_m} e_p) = O(\epsilon j(r)) + (\sum_{p=1}^2 j_{np} \nabla_{e_m} e_p).
$$
Then (\ref{consec}) becomes
\be\lb{precedent2}
\sum_{p=1}^2 j_{np} [(\nabla_{e_m} e_p) T + \nabla^2_{e_m e_p} T] = \sum_{p=1}^2 j_{np} (\nabla_{e_m} e_p) T + \sum_{p=1}^2 j_{np} \nabla^2_{e_m e_p} T \les \epsilon j^2(r),
\ee
hence
$$
\nabla_{e_m} \nabla_{e_p} T = (\nabla_{e_m} e_p) T + \nabla^2_{e_m e_p} T \les \epsilon j(r)
$$
and finally
\be\lb{final}
\nabla_{e_m} \nabla_{e_n} \det T \les \epsilon j^2(r).
\ee

Next, compute $\Delta f$: recalling that $a=J_1 \wedge J_2=\det T$ and $f=a^{-1/2}$,
\be\lb{deltaf}
\nabla f = -\frac {\nabla a}{2a^{3/2}},\ \Delta f = -\frac {\Delta a}{2a^{3/2}} + \frac {3|\nabla a|^2}{4a^{5/2}}.
\ee
We express $\nabla a$ in the orthonormal basis $\{e_1, e_2, e_3\}$ as
$$
\nabla a = (\nabla_{e_1} a) e_1 + (\nabla_{e_2} a) e_2 + (\nabla_{e_3} a) e_3,\ |\nabla a|^2 = (\nabla_{e_1} a)^2 + (\nabla_{e_2} a)^2 + (\nabla_{e_3} a)^2.
$$
By (\ref{both}) and (\ref{improv})
$$
\nabla_{e_3} a = \partial_r a = 2 j(r) j'(r) + O(\epsilon j^2(r)).
$$
For the other two partial derivatives, by (\ref{nablat}) $\nabla_{e_m} \det T \les \epsilon j^2(r)$, $1 \leq m \leq 2$. Thus
$$
|\nabla a|^2 = (\partial_r a)^2 + O(\epsilon^2 j^4(r)) = 4 j^2(r) (j'(r))^2 + O(\epsilon j^3(r) j'(r)).
$$

If the gradient is expressed in the basis $\{e_1, e_2, e_3\}$ as $\nabla a = \sum_{k=1}^3 (\nabla_{e_k} a) e_k$, then its divergence is
\be\lb{delta}
\Delta a = \div \nabla a = \nabla_{e_1} \nabla_{e_1} a + \nabla_{e_2} \nabla_{e_2} a + \nabla_{e_3} \nabla_{e_3} a + (\nabla_{e_1} a) \div e_1 + (\nabla_{e_2} a) \div e_2 + (\nabla_{e_3} a) \div e_3.
\ee

Equivalently, we start from the expression
\be\lb{expr}
\Delta a = \nabla^2_{e_1} a + \nabla^2_{e_2} a + \nabla^2_{e_3} a = \nabla_{e_1} \nabla_{e_1} a - \nabla_{\nabla_{e_1} e_1} a + \nabla_{e_2} \nabla_{e_2} a - \nabla_{\nabla_{e_2} e_2} a + \nabla_{e_3} \nabla_{e_3} a.
\ee
Taking the covariant derivative
$$
(\nabla_{e_1} e_1) \cdot e_2 = -(\nabla_{e_1} e_2) \cdot e_1,\ (\nabla_{e_1} e_1) \cdot e_3 = -(\nabla_{e_1} e_3) \cdot e_1,\ (\nabla_{e_2} e_2) \cdot e_1 = -(\nabla_{e_2} e_1) \cdot e_2,\ (\nabla_{e_2} e_2) \cdot e_3 = -(\nabla_{e_2} e_3) \cdot e_2
$$
so (\ref{expr}) becomes
$$
\nabla_{e_1} \nabla_{e_1} a + \nabla_{e_2} \nabla_{e_2} a + \nabla_{e_3} \nabla_{e_3} a + [(\nabla_{e_1} e_2) \cdot e_1] \nabla_{e_2} a + [(\nabla_{e_1} e_3) \cdot e_1] \nabla_{e_3} a + [(\nabla_{e_2} e_1) \cdot e_2] \nabla_{e_1} a + [(\nabla_{e_2} e_3) \cdot e_2] \nabla_{e_3} a
$$
and grouping the terms we obtain again (\ref{delta}).

Using (\ref{ekl2}) we compute
$$
\div e_1 = (\nabla_{e_1} e_1) \cdot e_1 + (\nabla_{e_2} e_1) \cdot e_2 + (\nabla_{e_3} e_1) \cdot e_3 = (\nabla_{e_2} e_1) \cdot e_2 = O(\epsilon).
$$
Note again that $(\nabla_{e_1} e_1) \cdot e_1 = 0$. Likewise $\ds\div e_2 = O(\epsilon)$ and
$$
\div e_3 = (\nabla_{e_1} e_3) \cdot e_1 + (\nabla_{e_2} e_3) \cdot e_2 + (\nabla_{e_3} e_3) \cdot e_3 = \frac {\partial_r a} a = 2\frac {j'(r)}{j(r)} + O\bigg(\epsilon \frac {j'(r)}{j(r)}\bigg).
$$
Using (\ref{nablat})
$$\begin{aligned}
(\nabla_{e_1} a) \div e_1 + (\nabla_{e_2} a) \div e_2 + (\nabla_{e_3} a) \div e_3 &= O(\epsilon j^2(r)) O(\epsilon) + \frac {(\partial_r a)^2} a = \frac {(\partial_r a)^2} a + O(\epsilon^2 j^2(r)).
\end{aligned}$$

One has
$$\begin{aligned}
\nabla_{e_3} \nabla_{e_3} a = \nabla_{e_3}^2 a = \partial^2_r a &= \partial^2_r J_1 \wedge J_2 + 2 \det \partial_r T + J_1 \wedge \partial^2_r J_2 \\
&= -(\mc A J_1) \wedge J_2 + 2 \det \partial_r T - J_1 \wedge (\mc A J_2) \\
&= -2\kappa_0 a + 2 \det \partial_r T + O(|\Rm_1| j^2(r)),
\end{aligned}$$
so by (\ref{final})
$$
\nabla_{e_1} \nabla_{e_1} a + \nabla_{e_2} \nabla_{e_2} a + \nabla_{e_3} \nabla_{e_3} a = -2\kappa_0 a + 2 \det \partial_r T + O(|\Rm_1| j^2(r)) + O(\epsilon j(r) j'(r)).
$$
Again, $|\Rm_1| \leq \|\nabla \Rm\|_{L^1(\gamma)} < \epsilon$, using the Sobolev embedding $\dot W^{1, 1} \subset L^\infty$ in one dimension. We end up with
$$\begin{aligned}
\Delta a &= -2\kappa_0 a + 2 \det \partial_r T + O(|\Rm_1| j^2(r)) + O(\epsilon j^2(r)) + \frac {(\partial_r a)^2} a + O(\epsilon^2 j^2(r)) \\
&= -2\kappa_0 a + 2 \det \partial_r T + \frac {(\partial_r a)^2} a + O(\epsilon j^2(r)).
\end{aligned}$$

The following estimate for $\partial_r J_1 \wedge \partial_r J_2 = \det \partial_r T$ has an improvement for $r<1$ due to (\ref{smallr}):
$$
\partial_r J_1 \wedge \partial_r J_2 = \det \partial_r T = (j'(r))^2 + O(\epsilon j^2(r)).
$$
Hence, using both (\ref{both}) and (\ref{improv}) for $r<1$, plugging our computations into (\ref{deltaf})
$$\begin{aligned}
\Delta f &= \frac {2\kappa a - 2 \det \partial_r T - \frac {(\partial_r a)^2} a + O(\epsilon j^2(r))}{2a^{3/2}} + \frac {3(\partial_r a)^2 + O(\epsilon^2 j^4(r))} {4a^{5/2}} \\
&= \frac {\kappa_0} {a^{1/2}} + \frac {(\partial_r a)^2-4a\det \partial_r T}{4a^{5/2}} + O\Big(\frac {\epsilon} {j(r)}\Big) = \kappa_0 f + O\Big(\frac {\epsilon} {j(r)}\Big).
\end{aligned}$$

Denote the error term above by $\ds\Error(x, x_0) = O\Big(\frac \epsilon {j(r)} \Big)$. Then
$$
\Delta f = \kappa_0 f + \Error(x, x_0)
$$
and $u(x, t)=f(x) \chi(r-t)$ given by (\ref{fx}) solves the wave equation (\ref{freemod}) with an error of
$$
\partial^2_t u - (\Delta-\kappa_0) u = \chi(r-t) \Error(x, x_0).
$$
In the same way as in (\ref{approx}), by taking a limit in $\chi$, this implies that for $t \geq 0$
$$
S_0(t)(x, x_0) = \frac 1 {4\pi} f(x) \delta_{d(x_0, x)=t} = \frac 1 {4\pi \sqrt {a(x, x_0)}} \delta_{d(x_0, x) = t}
$$
approximately solves the free modified wave equation (\ref{freemod}):
\be\lb{approx2}
\partial_t^2 S_0 - (\Delta-\kappa_0) S_0 = E,\ S_0(0)=0,\ \partial_t S_0(0) = \delta_{x_0},
\ee
where
$$
E(t)(x, x_0) = \Error(x, x_0) \delta_{d(x_0, x)=t}.
$$
Both the approximate solution $S_0(t)$ and the inhomogeneous error term $E(t)$ are measures supported on the light cone, meaning on $\partial B(x_0, t)$ for each $t>0$. The error term has size $\ds O\Big(\frac {\epsilon} {j(t)}\Big)$:
$$
\|\Error(x, x_0)\|_{L^\infty(\partial B(x_0, t))} \les \frac \epsilon {j(t)},
$$
hence reduces to $0$ in the constant sectional curvature case treated in the previous section.

By iterating we construct a sequence of increasingly better approximations to the sine propagator. First, we use the approximate solution $S_0$ to propagate the error, resulting in the second iteration
$$
S_1=S_0+S_0 * E,
$$
where $S_0 \ast E$ is defined, see (\ref{s0e1}), as
\be\lb{s0e}\begin{aligned}
[S_0 * E](t)(x_1, x_0) &= \int_0^t \int_{\mc M} S_0(t-s)(x_1, x) E(s)(x, x_0) \dd x \dd s \\
&= \int_0^t \int_{\mc M} \frac 1 {4\pi\sqrt {a(x_1, x)}} \delta_{d(x, x_1)=t-s} \Error(x, x_0) \delta_{d(x_0, x)=s} \dd x \dd s.
\end{aligned}\ee
Then $S_1$ is also an approximate solution to the wave equation, with error $E * E$:
$$
\partial^2_t S_1-(\Delta-\kappa_0)S_1=E * E,\ S_1(0)=0,\ \partial_t S_1(0) = \delta_{x_0}.
$$
We again apply the approximate propagator $S_0$ to $E*E$ and keep iterating. Further iterations take the form
$$
S_n=S_0+S_0 * E + S_0 * E * E + \ldots + S_0 * \underbrace {E * E * \ldots * E}_{n \text{ times}}
$$
and approximately satisfy equation (\ref{freemod}) with error
\be\lb{multerror}
\underbrace{E * E * \ldots * E}_{n+1 \text{ times}}.
\ee
The $L^1$ norm of $S_0(t)$ is of size $O(j(t))$, while the $L^1_x$ norm of $\Error(t)$ is of size $O(\epsilon j(t))$. Then, for fixed $T>0$, the $L^\infty([0, T]) L^1_x$ norm of the $n$-th error term (\ref{multerror}) is of size
$$
C^{n+1} \epsilon^{n+1} \int_{0 \leq s_1 \leq s_2 \leq \ldots \les s_n \leq t} j(s_1) j(s_2-s_1) \ldots j(t-s_n) \dd s_1 \ldots \dd s_n \les \frac {C^{n+1} \epsilon^{n+1}}{2^n} \frac {T^n}{n!} j(T) \to 0
$$
as $n \to \infty$. At the same time, $S_n$ form a Cauchy sequence in the same norm, with each term bounded in $L^\infty([0, T]) L^1_x$ by
$$
\frac {C^n \epsilon^n}{2^n} \frac {T^n}{n!} j(T)
$$
and an overall bound of
\be\lb{bddd}
\bigg(\sum_{n=0}^\infty \frac {C^n \epsilon^n}{2^n} \frac {T^n}{n!} \bigg) j(T) \leq e^{C\epsilon T/2} j(T).
\ee
It follows that the approximating sequence $S_n$ converges to a solution $S$ of (\ref{freemod}), the sum of the series
\be\lb{sseries}
S=S_0+S_0 * E + S_0 * E * E + \ldots
\ee
$S$ is the sine propagator of equation (\ref{freemod}), whose $L^1$ norm is also bounded by (\ref{bddd}).

%
%
%

In conclusion, for $t>0$ the propagator $S(t)(x_0, x)$ is given by a measure supported on $B(x_0, t)$, of total mass at most $O(e^{c\epsilon t} j(t))$. This measure is absolutely continuous except for the component $S_0$ supported on $\partial B(x_0, t)$.
\end{proof}

\subsection{Lebesgue spaces on the manifold}
By (\ref{both}) we have ensured that the volume and surface area of a metric ball are comparable to those for constant sectional curvature $\kappa_0$, up to factors of $1+O(\epsilon)$:
$$
\int_{B(x_0, r)} F \sim \int_{\S^2} \int_0^r F(s, \omega) j^2(s) \dd s \dd \omega
$$
and
$$
\int_{\partial B(x_0, r)} F \sim \int_{\S^2} F(r, \omega) j^2(r) \dd \omega.
$$
Thus, instead of looking along geodesics, we can use Lebesgue spaces to measure the size of the metricq perturbation $\Rm_1$.

One can characterize Lebesgue spaces $L^p$, $1 \leq p \leq \infty$, and Lorentz spaces in spherical coordinates. Unlike in the flat metric case $\R^3$, where they are tied to polynomial decay, here Lebesgue spaces are related to exponential decay. For example, $\ds \frac {G(r)}{j^2(r)} \in L^1(\mc M)$ if and only if $G \in L^1([0, \infty))$.

Likewise, $\ds \frac 1 {j^2(r)} \in L^{1, \infty}$. Indeed, for any $\alpha>0$, $\{x \mid |G(x)|>\alpha\} = B(x_0, r_0)$, where $r_0$ is such that $\ds j^2(r_0)=\frac 1 \alpha$. But then the volume of the ball is at most
$$
\les \int_0^{r_0} j^2(r) \dd r \les \int_0^{r_0} j(r)j'(r) \dd r \les j^2(r_0) = \frac 1 \alpha.
$$
Consequently $\ds \frac 1 {j(r)} \in L^{2, \infty}$.

Following \cite{RodSch}, we introduce the global Kato space (\ref{kato})
$$
\K = \{f \in L^1_{loc} \mid \sup_{x_0 \in \mc M} \int_{\mc M} \frac {|V(x)| \dd x}{j(d(x, x_0))} < \infty \},
$$
with the corresponding norm. Clearly, $L^{2-\epsilon} \cap L^{2+\epsilon} \subset L^{2, 1} \subset \K$ (compare to the flat metric case $\R^3$, where $L^{3/2-\epsilon} \cap L^{3/2+\epsilon} \subset L^{3/2, 1} \subset \K$), but $\K$ does not require local square integrability. Also let $\tilde \K = \K \cap L^1$.

In addition to $\K$ and $\tilde \K$, consider the modified Kato space $\K_\delta$, for small $\delta>0$, obtained by using
\be\lb{jdelta}
j_\delta(r)=\frac {\sinh(r(\alpha_0-\delta))}{\alpha_0-\delta}
\ee
instead of $j$ (\ref{jneg}) in the definition of the Kato space (\ref{kato}). Since $\tilde \K \subset \K_\delta \subset \K$, it follows that $\K^* \subset \K_\delta^*$.

As in \cite{BecGol1} and \cite{BecGol2}, for any two Banach lattices $X$ and $Y$ introduce the space
\be\lb{u}
\U(X, Y) = \{T(t, y, x) \mid \int_\R |T(t, y, x)| \dd \tau \in \B(X, Y)\}.
\ee
This structure is an algebroid under the operation $*$, see (\ref{s0e}),
\be\lb{s0e1}\begin{aligned}
[F * G](t)(x_1, x_0) &= \int_\R \int_{\mc M} F(t-s)(x_1, x) G(s)(x, x_0) \dd x \dd s.
\end{aligned}\ee
This operation is not commutative, but it is associative, and in general
$$
\|S * T\|_{\U(X, Z)} \leq \|S\|_{\U(Y, Z)} \|T\|_{\U(X, Y)}.
$$
Also let $\U(X) = \U(X, X)$, which is an algebra.

\begin{proposition}\lb{prop3} Consider a simply connected three-dimensional Riemannian manifold $(\mc M, g)$ such that the Riemann curvature tensor $\Rm$ satisfies conditions (\ref{rie})
$$
\|\Rm_1\|_{L^1(\Gamma)} = \sup_{\gamma \in \Gamma} \int_\R |\Rm_{\gamma(s)}-\Rm_0| \dd s < \epsilon << 1,
$$
(\ref{rie1}), and (\ref{rie2}) and such that
$$
\|\Rm_1\|_{L^1} < \epsilon,\ \|\nabla \Rm\|_{L^1} < \epsilon,\ \|\nabla^2 \Rm\|_{L^1} < \epsilon.
$$
Then the sine propagator $S(t)(x, x_0)$ for the free modified wave equation (\ref{freemod}) is in $\U(L^1, \K^*) \cap \U(\K, L^\infty)$ and $tS(t) \in \U(L^1, L^\infty)$.

Furthermore, for any $\delta>0$, if $\epsilon>0$ is sufficiently small,
$$
j_\delta(t) S(t) \in \U(L^1, L^\infty),\ j'_\delta(t) S(t) \in \U(L^1, \tilde \K^*) \cap \U(\tilde \K, L^\infty).
$$
\end{proposition}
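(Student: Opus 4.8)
The plan is to run the geometric Born series from the proof of Proposition~\ref{prop2} inside the algebroid $\U(\cdot,\cdot)$. Write $S=S_0+S_0*E+S_0*E*E+\cdots=S_0*R$ with $R=\sum_{n\ge0}E^{*n}$, and, since $\sin(t\sqrt{H_0})/\sqrt{H_0}$ is self-adjoint (so $S=S^{T}$), also $S=R^{T}*S_0^{T}$, where $^{T}$ denotes the transpose in the space variables. The first task is to record sharper forms of the building blocks than those used in Proposition~\ref{prop2}: one has $S_0(t)(x,y)=\frac1{4\pi\sqrt{a(x,y)}}\,\delta_{d(x,y)=|t|}$ with $\sqrt{a(x,y)}\sim j(d(x,y))$ by~(\ref{both}), and $E(t)(x,y)=\Error(x,y)\,\delta_{d(x,y)=|t|}$; since the crude bound $\|\Error\|_\infty\les\epsilon/j(d)$ will not suffice, I would revisit that computation and extract
\[
|\Error(x,y)|\les\frac{\Theta(x,y)}{j(d(x,y))},\qquad \Theta(x,y):=\int_0^{d(x,y)}e^{-\alpha_0(d(x,y)-s)}\bigl(|\Rm_1|+|\nabla\Rm|+|\nabla^2\Rm|\bigr)\!\bigl(\gamma_{yx}(s)\bigr)\dd s ,
\]
the weight $e^{-\alpha_0(d-s)}$, anchored at the far endpoint $x$, being exactly what the Jacobi propagator $k(t,s)\les j(t-s)$ produces when integrated against the quadratically growing forcing in~(\ref{nablat})--(\ref{final}). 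By~(\ref{rie})--(\ref{rie2}), $\Theta\les\epsilon$; also $\Theta(x,y)\les\epsilon\,d(x,y)$ when $d(x,y)\le1$ (the curvature perturbation being in $L^\infty$); and by the hypotheses $\|\Rm_1\|_{L^1},\|\nabla\Rm\|_{L^1},\|\nabla^2\Rm\|_{L^1}<\epsilon$, unwound in geodesic polar coordinates, $\Theta(x,y)$ decays exponentially in $d(x,y)$ in an averaged-over-directions sense.

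The second task is to reduce membership in each $\U$-space to a kernel bound: $T\in\U(X,Y)$ exactly when the operator with kernel $\int_\R|T(t,\cdot,\cdot)|\dd t$ is bounded $X\to Y$; testing on Dirac masses, for $X=L^1$ this is $\sup_y\bigl\|\int_\R|T(t,\cdot,y)|\dd t\bigr\|_Y<\infty$ and for $Y=L^\infty$ it is $\sup_x\bigl\|\int_\R|T(t,x,\cdot)|\dd t\bigr\|_{X^*}<\infty$. For $S_0$ the co-area formula gives $\int_\R w(|t|)\,|S_0(t)(x,y)|\dd t\sim w(d(x,y))/j(d(x,y))$ for any weight $w$, so the base cases read: $1/j(d(\cdot,y))\in\K^*$ uniformly in $y$ (immediate from~(\ref{kato}) with basepoint $y$); $d(x,y)/j(d(x,y))\les1$; $j_\delta(d(x,y))/j(d(x,y))\les e^{-\delta d(x,y)}\les1$; and $j'_\delta(d(x,y))/j(d(x,y))\les e^{-\delta d(x,y)}/\min(1,d(x,y))\in\tilde\K^*$ uniformly in $y$ (from~(\ref{modk})). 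These establish the four assertions with $S_0$, and likewise with the transpose $S_0^{T}$ (same bounds, as $d$ is symmetric), in place of $S$.

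The third task is the iteration. By submultiplicativity, $\|F*G\|_{\U(X,Z)}\le\|F\|_{\U(Y,Z)}\|G\|_{\U(X,Y)}$, everything reduces to the single input $\|E\|_{\U(\K,\K)}\les\epsilon$ — which, using $\Theta\les\epsilon$, moreover yields $E\in\U(\K,L^\infty)\cap\U(L^1,\K)$ and, by transposition, $E^{T}\in\U(\K^*,\K^*)$, all with $\les\epsilon$ norms. Granting this, $R=\sum E^{*n}\in\U(\K,\K)$ and $R^{T}\in\U(\K^*,\K^*)$, so $S=S_0*R\in\U(\K,L^\infty)$ and $S=R^{T}*S_0^{T}\in\U(L^1,\K^*)$ by the base cases. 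The weighted assertions $tS$, $j_\delta S$, $j'_\delta S$ follow the same way after distributing the weight across each convolution — via $t=(t-s)+s$, and via the exact identities $j_\delta(t)=j_\delta(t-s)j'_\delta(s)+j'_\delta(t-s)j_\delta(s)$ and $j'_\delta(t)=j'_\delta(t-s)j'_\delta(s)+(\alpha_0-\delta)^2j_\delta(t-s)j_\delta(s)$ (and their $j$-analogues) — the new inputs being the weighted smallness of $E$ ($tE\in\U(\K,\K)\cap\U(L^1,\K)$ small; $j_\delta(t)E(t),\,j'_\delta(t)E(t)\in\U(\tilde\K,\tilde\K)$ small) plus the already-proved membership of $tS_0,\,j_\delta S_0,\,j'_\delta S_0$, which suffices to control the one term in which the weight falls entirely on $S_0$ (there the extra factor $d(x,w)/j(d(x,w))\le1$, resp.\ $\les e^{-\delta d(x,w)}$, is harmless once the tail $E^{*n}$ of the series, applied to a Dirac mass, is seen to lie in $\K$, resp.\ $\tilde\K$). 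For the $j_\delta$-statements the intermediate space is forced down from $\K$ to $\tilde\K$, which is why those conclusions feature $\tilde\K^*$.

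The main obstacle is the estimate $\|E\|_{\U(\K,\K)}\les\epsilon$, equivalently the double Kato resolvent bound
\[
\sup_{y,z}\ j(d(y,z))\int_{\mc M}\frac{\Theta(x,y)}{j(d(x,z))\,j(d(x,y))}\dd x\les\epsilon
\]
together with its weighted ($t$-, $j_\delta$-) and $\tilde\K$-analogues. The naive use of $\Theta\le\epsilon$ is useless: already in the constant-curvature model $\int_{\mc M}\dd x\big/\bigl(j(d(x,z))j(d(x,y))\bigr)\sim d(y,z)/j(d(y,z))$, so a constant $\Theta$ would cost a factor $d(y,z)$. One must exploit the averaged exponential decay of $\Theta(x,y)$ in $d(x,y)$ supplied by the $L^1(\mc M)$ hypotheses, splitting the $x$-integral into: the region near $y$, where $1/j(d(x,y))\sim1/d(x,y)$ is locally $\R^3$-integrable and the other factors are bounded; the symmetric region near $z$; and the region far from both, where all three $1/j$ factors are genuinely exponentially small and the $e^{2\alpha_0 r}$-weighted integrability of the curvature perturbation along geodesic rays — together with the placement of the weight in $\Theta$ at the $x$-endpoint — makes the $x$-integral converge, the exponents combining in each region to at worst $e^{-\alpha_0 d(y,z)}\sim\epsilon/j(d(y,z))$. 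Carrying out this region-by-region analysis, and upstream extracting the weighted pointwise bound on $\Error$ in place of the crude $\epsilon/j(d)$, is the real content; the weighted variants are the same analysis with an extra $e^{-\delta d}$ to spend (for $j_\delta$), or an extra linear factor $d$ to absorb (for $tS$, using $\int s\bigl(|\Rm_1|+|\nabla\Rm|+|\nabla^2\Rm|\bigr)\dd s\le\int e^{2\alpha_0 s}(\cdots)\dd s<\infty$ along rays).
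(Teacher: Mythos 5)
Your outline is close in spirit to the paper's argument (geometric Born series $S = S_0 + S_0 * E + \cdots$ in the algebroid $\U$, co-area base cases for $S_0$, then iterate), and your base-case computations for $S_0$ --- $d/j(d) \les 1$, $j_\delta(d)/j(d) \les e^{-\delta d}$, $j'_\delta(d)/j(d) \les e^{-\delta d}/\min(1,d) \in \tilde\K^*$, and $1/j(d(\cdot,y)) \in \K^*$ --- are all correct and match the paper's (\ref{s01}) and (\ref{s02}). The $j_\delta$-addition formulas you use to distribute the weight across convolutions coincide with the paper's (\ref{comp1})--(\ref{comp4}).

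The genuine gap is your central input $\|E\|_{\U(\K,\K)} \les \epsilon$. The paper explicitly declines to prove this, and states: ``since in general $j(r)\Error(x,x_0)$ is not in $L^1_x$, the error term $E$ need not belong to $\U(\K)$.'' Instead, the paper routes the iteration through the intermediate space $\K_\delta$, defined via $j_\delta$ in (\ref{jdelta}), and uses the provable bounds $E \in \U(L^1) \cap \U(\K_\delta)$, $tE \in \U(L^1,\K_\delta)$ from (\ref{ee1}). The $tS$-estimate in (\ref{der}) then passes through $\U(\K_\delta)$ for the inner $(I-E)^{-1}$, not $\U(\K)$. Your double-Kato bound
\[
\sup_{y,z}\ j(d(y,z))\int_{\mc M}\frac{\Theta(x,y)}{j(d(x,z))\,j(d(x,y))}\,\dd x \les \epsilon
\]
is strictly sharper than anything the paper establishes, and the heuristic you give for the ``far from both'' region does not visibly close: the volume element $j^2(r)\sim e^{2\alpha_0 r}$, the kernel factor $j(r)^{-1}\sim e^{-\alpha_0 r}$, and the weight in $\Theta$ together leave a factor $B(r,\omega):=\int_0^r e^{\alpha_0 s}\beta(s,\omega)\,\dd s$ that is bounded but not decaying in $r$, and dividing by $j(d(z,x))$ recovers $e^{-\alpha_0 D}$ only modulo a logarithmic / linear-in-$D$ loss (this is exactly the $D e^{-\alpha_0 D}$ behaviour of $\int_{\mc M} \dd x/(j(d(x,y))\,j(d(x,z)))$ that you yourself flag as the obstacle). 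The $\K_\delta$ device is precisely the paper's fix: the extra $e^{-\delta D}$ room absorbs the polynomial-in-$D$ loss so that the lossy triangle-inequality estimate, rather than a sharp one, suffices. So the missing piece in your proposal is either a genuinely new proof of $E \in \U(\K)$ (which would be a stronger result than the paper claims) or --- more economically --- the substitution of $\K_\delta$ for $\K$ throughout the middle of each chain, leaving $\K$ and $\K^*$ only at the two ends, which is what the paper actually does.

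A secondary, smaller point: your proposed sharpening $|\Error(x,y)| \les \Theta(x,y)/j(d(x,y))$ is not the form the paper extracts. The paper's (\ref{explicit}) is $\Error(x,x_0) \les \tilde E(x_0,\omega)/j^3(r) + |{\Rm_1}|_x/j(r)$, with $\tilde E$ a function of $(x_0,\omega)$ only (hence $L^1(\S^2)$ small, but $r$-independent, i.e., \emph{not} pointwise decaying in $r$). This is obtained by the scattering construction of $T_\infty(\omega)$ in (\ref{tinf}), not by integrating a geodesic weight as in your $\Theta$. The two are not obviously interchangeable, and the $\tilde E/j^3$ structure is what makes the $\K_\delta$ estimates drop out cleanly.
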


\begin{proof}
For $f=a^{-1/2}$, recall
\be\lb{delta2}
\Delta f = -\frac {\Delta a}{2a^{3/2}} + \frac {3|\nabla a|^2}{4a^{5/2}},
\ee
where we consider each term individually in
$$
\Delta a = \nabla_{e_1} \nabla_{e_1} a + \nabla_{e_2} \nabla_{e_2} a + \nabla_{e_3} \nabla_{e_3} a + (\div e_1) \nabla_{e_1} a + (\div e_2) \nabla_{e_2} a + (\div e_3) \nabla_{e_3} a.
$$
As in the proof of the previous proposition, the goal is writing $\Delta f = \kappa_0 f + \Error$, but this time with improved integrability estimates for $\Error$.

We split the manifold $\mc M$ into two regions, as a function of $r=d(x_0, x)$. For $r<1$, i.e.~on $B(x_0, 1)$, we use the estimates of Proposition \ref{prop2}, leading to
$$
\Delta f = \kappa_0 f + O\bigg(\frac \epsilon r\bigg),
$$
where the error term is integrable on $B(x_0, 1)$, with a bound independent of $x_0$.

For $r>1$ we take advantage of the fact that $\Rm_1 = \Rm - \Rm_0 \in L^1$ is small in norm
\be\lb{l1norm}
\|\Rm_1\|_{L^1} < \epsilon << 1
\ee
and for future estimates also use $\|\nabla \Rm\|_{L^1} < \epsilon$ and $\|\nabla^2 \Rm\|_{L^1} < \epsilon$, together with similar bounds in the Kato space $\K$. This leads to a different set of bounds for the error terms in (\ref{delta2}), not over each individual geodesic as in Proposition \ref{prop2}, but averaged over all geodesics starting at $x_0$.

In spherical coordinates, (\ref{l1norm}) and (\ref{both}) mean
$$
\int_{\S^2} \int_0^\infty |\Rm(r, \omega)-\Rm_0| j^2(r) \dd r \dd \omega < \epsilon
$$
and same for $\nabla \Rm$ and $\nabla^2 \Rm$.
This means exponential decay, in average, over geodesics starting from $x_0$.


Under these conditions, a more precise estimate for $J_k$ comes from
\be\lb{aici}
\frac {J_k(r, \omega)} {j(r)} = e_k - \int_0^r \frac {j(r-s)}{j(r)} [\mc A_1 J_k](s) \dd s.
\ee
Here $\ds\frac {j(r-s)}{j(r)} \leq \frac 1 {j'(s)}$ and
$$
\int_0^\infty \frac 1 {j'(s)} [\mc A_1 J_k](s) \dd s \les \int_0^\infty |\Rm_1| \dd s < \infty.
$$
We use a scattering approach here. For $1 \leq k \leq 2$, let
\be\lb{ik}
I_k(\omega) = \int_0^\infty e^{-s\alpha_0} [\mc A_1 J_k](s, \omega) \dd s,
\ee
where the integral is taken along the geodesic starting at $x_0$ in the direction $\omega \in \S^2 \subset T_{x_0} \mc M$, that is on $\{\exp_{x_0}(s\omega) \mid s \in [0, \infty)\}$.

Taking the covariant derivative $\nabla_{e_k} I_\ell$, $1 \leq k, \ell \leq 2$, leads to taking the covariant derivative with respect to $J_k$ on the right-hand side of (\ref{ik}):
$$
\nabla_{e_k} I_\ell(\omega) = \int_0^\infty e^{-s\alpha_0} [(\nabla_{J_k} \mc A) J_\ell + \mc A_1 \nabla_{J_k} J_\ell](s, \omega) \dd s.
$$
For future reference
$$
|I_k| \les \|\Rm_1\|_{L^1(\Gamma)},\ \int_{\S^2} |\nabla_{e_k} I_\ell| \les \|\Rm_1\|_\K + \|\nabla \Rm\|_\K,
$$
and
\be\lb{second_order}
\int_{\S^2} |\nabla_{e_k} \nabla_{e_\ell} I_m| \les \|\Rm_1\|_{L^1} + \|\nabla \Rm\|_{L^1} + \|\nabla^2 \Rm\|_{L^1}.
\ee
Here $\omega \in \S^2 \subset T_{x_0} \mc M$ and $e_k \in T_\omega (T_{x_0} \mc M) = T_{x_0} \mc M$. However, $e_k$ do not commute with $e_3$, so the corresponding derivatives at $x \in \mc M$, where $d(x_0, x) = r$, have to be replaced by $J_k$.

We extend $I_k(\omega)$, initially defined on $\S^2 \subset T_{x_0} \mc M$, to a function on $\mc M$, constant along geodesics starting from $x_0$, which by abuse of notation we call $I_k(r, \omega)=I_k(\omega)$. Then
$$
\nabla_{J_k} I_\ell(r, \omega) = \nabla_{e_k} I_\ell(\omega).
$$
We do the same for $T_\infty$ (\ref{tinf}).

A useful $L^2$ estimate for $\nabla_{e_k} I_\ell$ follows from (\ref{second_order}) by the two-dimensional $\S^2$ Sobolev embedding $W^{1, 1} \subset L^2$:
\be\lb{sobolev}
\|f - \ov f\|_{L^2(\S^2)} \les \|\nabla f\|_{L^1(\S^2)},
\ee
where $\ov f$ is the average of $f$ on $\S^2$, with a constant that only depends on $\S^2$. Then
\be\lb{l2}
\|\nabla_{e_k} I_\ell\|_{L^2(\S^2)} \les \|\Rm_1\|_{L^1 \cap \K} + \|\nabla \Rm\|_{L^1 \cap \K} + \|\nabla^2 \Rm\|_{L^1}.
\ee

For $r \geq 1$,
\be\lb{aux}
\frac {j(r-s)}{j(r)} - e^{-s\alpha_0} = \frac {e^{\alpha_0(r-s)} - e^{\alpha_0(s-r)}}{e^{r\alpha_0}-e^{-r\alpha_0}} - e^{-s\alpha_0} \les e^{(s-2r)\alpha_0},
\ee
so
$$\begin{aligned}
\bigg|\int_0^r \frac {j(r-s)}{j(r)} [\mc A_1 J_k](s) \dd s - I_k\bigg| &\les \int_r^\infty e^{-s\alpha_0} [\mc A_1 J_k](s) \dd s + \int_0^r e^{(s-2r)\alpha_0} [\mc A_1 J_k](s) \dd s \\
&\les e^{-2r\alpha_0} \int_0^\infty |\Rm_1| j^2(s) \dd s.
\end{aligned}$$
Averaging over all directions $\omega \in \S^2 \subset T_{x_0} \mc M$, the right-hand side is controlled by $e^{-2r} \|\Rm_1\|_{L^1}$. Hence
\be\lb{l11}
\int_{\S^2} \bigg|\frac {J_k(r, \omega)}{j(r)} - e_k + I_k(\omega)\bigg| \les e^{-2r \alpha_0} \|\Rm_1\|_{L^1}.
\ee

Likewise, a more precise estimate for $\partial_r J_k$ comes from
$$
\frac {\partial_r J_k(r, \omega)-j'(r)e_k} {j(r)} = e_k - \int_0^r \frac {j'(r-s)}{j(r)} [\Rm_1(J_k, e_3)e_3](s) \dd s.
$$
Note that $\ds\frac {j'(r-s)}{j(r)} \leq \frac 1 {j(s)}$ and
$$
\int_0^\infty \frac 1 {j(s)} [\Rm_1(J_k, e_3)e_3](s) \dd s \les \int_0^\infty |\Rm_1| \dd s < \infty,
$$
while
$$
\frac {j'(r-s)}{j(r)} - \alpha_0 e^{-s\alpha_0} \les e^{(s-2r)\alpha_0}.
$$
Consequently
$$\begin{aligned}
	&\bigg|\int_0^r \frac {j'(r-s)}{j(r)} [\Rm_1(J_k, e_3)e_3](s) \dd s - \alpha_0 I_k \bigg| \les \\
	&\les \int_r^\infty e^{-s\alpha_0} [\Rm_1(J_k, e_3)e_3](s) \dd s + \int_0^r e^{(s-2r)\alpha_0} [\Rm_1(J_k, e_3)e_3](s) \dd s \les e^{-2r\alpha_0} \int_0^\infty |\Rm_1| j^2(s) \dd s,
\end{aligned}$$
leading to
\be\lb{l12}
\int_{\S^2} \bigg|\frac {\partial_r J_k(r) - j'(r) e_k}{j(r)} + \alpha_0 I_k(\omega)\bigg| \dd \omega \les e^{-2r\alpha_0} \|\Rm_1\|_{L^1}.
\ee

In terms of the matrix $T$ (\ref{t}), we construct
\be\lb{tinf}
T_\infty(r, \omega) = T_\infty(\omega)=\begin{pmatrix}
1-I_{11}(\omega) & -I_{21}(\omega) \\
-I_{12}(\omega) & 1-I_{22}(\omega)
\end{pmatrix},
\ee
where
$$
I_k(\omega) = \sum_{\ell=1}^2 I_{k\ell}(\omega) e_\ell,\ I_{k\ell}(\omega) = I_k(\omega) \cdot e_\ell,\ 1 \leq k, \ell \leq 2.
$$
Then $\det T_\infty = (e_1-I_1) \wedge (e_2-I_2)$ and
$$
\int_{\S^2} \bigg|\frac T {j(r)} - T_\infty\bigg| \les e^{-2r\alpha_0} \|\Rm_1\|_{L^1},\ \int_{\S^2} \bigg|\frac {\partial_r T-j'(r) T_\infty}{j(r)}\bigg| \les e^{-2r\alpha_0} (\|\Rm_1\|_{L^1} + \|\Rm_1\|_{L^1(\Gamma)}).
$$
In the second inequality, the terms corresponding to $I_k$ are $\alpha_0 j(r) I_k$ and the difference with $j'(r) I_k$ is of size
$$
e^{-2r\alpha_0} I_k \les e^{-2r\alpha_0} \|\Rm_1\|_{L^1(\Gamma)},
$$
which appears on the right-hand side.


Finally,
\be\lb{final1}
\int_{\S^2} |a(r, \omega)-j^2(r) \det T_\infty| \dd \omega \les \|\Rm_1\|_{L^1},\ \int_{\S^2} |\partial_r a(r, \omega) - 2 j(r) j'(r) \det T_\infty| \dd \omega \les \|\Rm_1\|_{L^1},
\ee
and
\be\lb{final2}
\int_{\S^2} |\det \partial_r T(r, \omega)-(j'(r))^2 \det T_\infty| \dd \omega \les \|\Rm_1\|_{L^1}.
\ee

Using $2j(r)j'(r) \det T_\infty$ instead of $2j(r)j'(r)$ leads to a large improvement in the error bounds. Thus we can capture most of the error in (\ref{both}) in a precise way, in average.

Assuming that $\|\nabla \Rm\|_{L^1}<\infty$ and $\|\Rm_1\|_{L^1}<\infty$, for a better estimate of $\nabla_{e_k} a$ write as in (\ref{aici})
$$
J_k(r, \omega) = j(r) e_k - \int_0^r j(r-s) [\mc A_1 J_k](s) \dd s
$$
and
$$
\frac {J_k(r, \omega)}{j(r)} - (e_k - I_k(\omega)) = \int_r^\infty e^{-s\alpha_0} [\mc A_1 J_k](s) - \int_0^r (\frac {j(r-s)}{j(r)} - e^{-s\alpha_0}) [\mc A_1 J_k](s) \dd s.
$$
In terms of $T$, this means
$$
T(r, \omega) = j(r) I - \int_0^r j(r-s) [\mc A_1 T](s) \dd s
$$
and
\be\lb{tmp1}
\frac {T(r, \omega)}{j(r)} - T_\infty(\omega) = \int_r^\infty e^{-s\alpha_0} [\mc A_1 T](s) \dd s - \int_0^r (\frac {j(r-s)}{j(r)} - e^{-s\alpha_0}) [\mc A_1 T](s) \dd s.
\ee


Applying the vector fields $J_m$, which commute with $\partial_r$, to (\ref{tmp1}) produces
$$\begin{aligned}
\frac {\nabla_{J_m} T(r, \omega)}{j(r)} - \nabla_{J_m} T_\infty(r, \omega) &= \int_r^\infty e^{-s\alpha_0} [(\nabla_{J_m} \mc A) T + \mc A_1 (\nabla_{J_m} T)](s) \dd s \\
&- \int_0^r (\frac {j(r-s)}{j(r)} - e^{-s\alpha_0}) [(\nabla_{J_m} \mc A) T + \mc A_1 (\nabla_{J_m} T)](s) \dd s.
\end{aligned}$$
Using (\ref{aux}), we bound the right-hand side by
$$
\les \int_r^\infty e^{-s\alpha_0} j^2(s) (|\nabla \Rm|+|\Rm_1|) \dd s + \int_0^r e^{(s-2r)\alpha_0} j^2(s) (|\nabla \Rm|+|\Rm_1|) \dd s.
$$
Then
\be\lb{important2}
\int_{\S^2} \bigg|\frac {\nabla_{J_m} T(r, \omega)}{j(r)} - \nabla_{J_m} T_\infty(r, \omega)\bigg| \dd \omega \les \min(e^{-r\alpha_0} (\|\nabla \Rm\|_{L^1} + \|\Rm_1\|_{L^1}), \|\nabla \Rm\|_\K + \|\Rm_1\|_\K).
\ee
Together with (\ref{important}) and using the two-dimensional $\S^2$ Sobolev embedding $W^{1, 1} \subset L^2$ (\ref{sobolev}) this implies
$$
\bigg\|\frac {\nabla_{J_m} T(r, \omega)}{j(r)} - \nabla_{J_m} T_\infty(r, \omega)\bigg\|_{L^2(\S^2)} \les \|\nabla^2 \Rm\|_{L^1} + \|\nabla \Rm\|_{L^1 \cap \K} + \|\Rm_1\|_{L^1 \cap \K}
$$
and
$$
\|\nabla_{e_m} T(r, \omega) - j(r) \nabla_{e_m} T_\infty(r, \omega)\|_{L^2(\S^2)} \les \|\nabla^2 \Rm\|_{L^1} + \|\nabla \Rm\|_{L^1 \cap \K} + \|\Rm_1\|_{L^1 \cap \K}.
$$
Consequently $\nabla_{e_m} T$ is roughly the same, for large $r$, as
$$
\nabla_{e_m} T \sim j(r) \nabla_{e_m} T_\infty(r, \omega) \les \nabla_{J_m} T_\infty(r, \omega) = \nabla_{e_m} T_\infty(\omega),
$$
modulo an error term that satisfies the same bound:
$$\begin{aligned}
\int_{\S^2} |\nabla_{e_m} T(r, \omega)|^2 \dd \omega &\les \int_{\S^2} |\nabla_{e_m} T_\infty(\omega)|^2 \dd \omega + \int_{\S^2} |\nabla_{e_m} T(r, \omega) - j(r) \nabla_{e_m} T_\infty(r, \omega)|^2 \dd \omega \\
&\les \|\nabla^2 \Rm\|_{L^1} + \|\nabla \Rm\|_{L^1 \cap \K} + \|\Rm_1\|_{L^1 \cap \K}
\end{aligned}$$
and
$$
\int_{\partial B(x_0, r)} |\nabla_{e_m} T(r, \omega)|^2 \dd A \les j^2(r) \int_{\S^2} |\nabla_{e_m} T(r, \omega)|^2 \dd \omega \les j^2(r) (\|\nabla^2 \Rm\|_{L^1} + \|\nabla \Rm\|_{L^1 \cap \K} + \|\Rm_1\|_{L^1 \cap \K}),
$$
so
$$\begin{aligned}
\int_{\partial B(x_0, r)} |\nabla_{e_m} \det T(r, \omega)|^2 \dd A \les j^4(r) (\|\nabla^2 \Rm\|_{L^1} + \|\nabla \Rm\|_{L^1 \cap \K} + \|\Rm_1\|_{L^1 \cap \K}).
\end{aligned}$$

Thus the contribution of this term to the error in (\ref{delta2}), integrated over $\partial B(x_0, r)$, is of size $\ds \frac 1 {j(r)}$, hence integrable in $r>1$.


%

Taking one more covariant derivative in (\ref{tmp1})
$$\begin{aligned}
&\frac {\nabla_{J_m} \nabla_{J_n} T(r, \omega)}{j(r)} - \nabla_{J_m} \nabla_{J_n} T_\infty(r, \omega) = \\
&=\int_r^\infty e^{-s\alpha_0} [(\nabla^2_{J_m J_n} \mc A) T + \mc A_1 (\nabla^2_{J_m J_n} T) + (\nabla_{J_m} \mc A) (\nabla_{J_n} T) + (\nabla_{J_n} \mc A) (\nabla_{J_n} T)](s) \dd s \\
&- \int_0^r (\frac {j(r-s)}{j(r)} - e^{-s\alpha_0}) [(\nabla^2_{J_m J_n} \mc A) T + \mc A_1 (\nabla^2_{J_m J_n} T) + (\nabla_{J_m} \mc A) (\nabla_{J_n} T) + (\nabla_{J_n} \mc A) (\nabla_{J_n} T)](s) \dd s,
\end{aligned}$$
which using (\ref{aux}) leads to
\be\lb{important}
\int_{\S^2} \bigg|\frac {\nabla_{J_m} \nabla_{J_n} T(r, \omega)}{j(r)} - \nabla_{J_m} \nabla_{J_n} T_\infty(\omega)\bigg| \les \|\nabla^2 \Rm\|_{L^1} + \|\nabla \Rm\|_{L^1} + \|\Rm_1\|_{L^1}.
\ee
By the same procedure as in (\ref{precedent2}) and using (\ref{important2}) one obtains
$$
\int_{\S^2} \bigg|\frac {\nabla_{e_m} \nabla_{e_n} T(r, \omega)}{j(r)} - \nabla_{e_m} \nabla_{e_n} T_\infty(\omega)\bigg| \les \frac 1 {j^2(r)} (\|\nabla^2 \Rm\|_{L^1} + \|\nabla \Rm\|_{L^1} + \|\Rm_1\|_{L^1}),
$$
so
$$
\int_{\S^2} |\nabla_{e_m} \nabla_{e_n} T(r, \omega)| \dd \omega \les \frac 1 {j(r)} (\|\nabla^2 \Rm\|_{L^1} + \|\nabla \Rm\|_{L^1} + \|\Rm_1\|_{L^1}).
$$
Consequently
$$
\int_{\S^2} |\nabla_{e_m} \nabla_{e_n} \det T(r, \omega)| \dd \omega \les \|\nabla^2 \Rm\|_{L^1} + \|\nabla \Rm\|_{L^1} + \|\Rm_1\|_{L^1}
$$
and
$$
\int_{\partial B(x_0, r)} |\nabla_{e_m} \nabla_{e_n} \det T(r, \omega)| \dd A \les j^2(r) (\|\nabla^2 \Rm\|_{L^1} + \|\nabla \Rm\|_{L^1} + \|\Rm_1\|_{L^1}).
$$
Then the contribution of this term to the error in (\ref{delta2}), integrated over $\partial B(x_0, r)$, is at most of size $\ds\frac 1 {j(r)}$, which is integrable in $r>1$.

Furthermore recall
\be\lb{complicated}\begin{aligned}
\nabla_{e_3} \nabla_{e_3} a = \nabla_{e_3}^2 a = \partial^2_r a &= (\partial^2_r J_1) \wedge J_2 + 2 (\partial_r J_1) \wedge (\partial_r J_2) + J_1 \wedge (\partial^2_r J_2) =  \\
&= -(\mc A J_1) \wedge J_2 + 2 \det \partial_r T - J_1 \wedge (\mc A J_2) \\
&= -2\kappa_0 a + 2 \det \partial_r T + O(|\Rm_1|j^2(r)).
\end{aligned}\ee
The contribution of the last term in (\ref{complicated}) to the total error in (\ref{delta2}) is integrable:
$$
\int_{\mc M} \frac {|\Rm_1| j^2(r)} {a^{3/2}} \les \int_{\mc M} \frac {|\Rm_1|} {j(r)} \leq \|\Rm_1\|_\K.
$$

Next
$$
\div e_1=(\nabla_{e_2} e_1) \cdot e_2,\ \div e_2=(\nabla_{e_1} e_2) \cdot e_1
$$
are of size $O(\epsilon)$ by (\ref{ekl2}) and using (\ref{important2}) the corresponding error terms for $(\div e_1) \nabla_{e_1} a + (\div e_2) \nabla_{e_2} a$ in (\ref{delta2}) are integrable. So far we have
$$
\Delta f = \frac {\kappa_0} {a^{1/2}} + \frac {(\partial_r a)^2-4a\det \partial_r T}{4a^{5/2}} + \Error,
$$
where $\Error$ is an integrable error term, with $\|\Error\|_{L^1(\mc M)} \les \epsilon$.

For the last remaining term and $r \geq 1$, due to (\ref{final1}) and (\ref{final2})
$$
\int_{\S^2} |(\partial_r a)^2-4j^2(r)(j'(r))^2(\det T_\infty)^2| \dd \omega \les j^2(r) \|\Rm_1\|_{L^1}
$$
and
$$
\int_{\S^2} |a\det \partial_r T-j^2(r)(j'(r))^2(\det T_\infty)^2| \dd \omega \les j^2(r) \|\Rm_1\|_{L^1}
$$
hence
$$
\int_{\S^2} |(\partial_r a)^2-4a\det \partial_r T| \dd \omega \les j^2(r) \|\Rm_1\|_{L^1},
$$
so this term's contribution to the error in (\ref{delta2}), integrated over $\partial B(x_0, r)$, is again of size $\ds \frac 1 {j(r)}$, hence integrable in $r>1$.

Consequently the error term in (\ref{delta2}) is integrable:
$$
\Delta f = \kappa_0 f + \Error,
$$
where the error term $\Error$ has the following structure: in general it is of size $\ds O\Big(\frac {\epsilon} {j(r)}\Big)$, hence for $r<1$ it is of size $\ds O\Big(\frac {\epsilon} r\Big)$, by Proposition \ref{prop2}, while for $r \geq 1$
\be\lb{explicit}
\Error(x, x_0) \les \frac {\tilde E(x_0, \omega)} {j^3(r)} + \frac {|{\Rm_1}_x|}{j(r)},
\ee
where $\tilde E(x_0, \omega) \in L^1_\omega(\S^2)$, $\|\tilde E\|_{L^1_\omega(\S^2)} \les \epsilon$. The second right-hand side term comes from the last term in (\ref{complicated}).

Hence $\Error(x, x_0) \in L^1_x(\mc M)$ and $\sup_{x_0 \in \mc M} \|\Error(x, x_0)\|_{L^1_x} \les \epsilon$. Furthermore, due to the $\ds O\Big(\frac {\epsilon} {j(r)}\Big)$ bound, uniformly for $x_1 \in \mc M$
$$
\sup_{x_1 \in \mc M} \int_{B(x_1, 1)} \frac {\Error(x, x_0)}{j(d(x_1, x))} \dd x \les \epsilon,
$$
while the integral on the complement ${}^c B(x_1, 1)$ is uniformly bounded by $\|\Error\|_{L^1}$. Hence $\Error(x, x_0) \in \K_x(\mc M)$ as well and $\sup_{x_0 \in \mc M} \|\Error(x, x_0)\|_{\K_x} \les \epsilon$. In other words,
\be\lb{fir}
\Error \in \B(L^1) \cap \B(L^1, \K),
\ee
which also implies that $\Error \in \B(L^1, \tilde \K) \subset \B(\tilde \K)$, with norms of size $O(\epsilon)$.

In the same manner, due to the exponential decay, $r \Error(x, x_0) \in \B(L^1) \cap \B(L^1, \K)$ as well. In fact, we can replace the polynomial weight by an exponential one, see below.

If $j(r) \Error(x, x_0)$ were in $L^1_x$, then
\be\lb{reason}
\int_{\mc M} \frac 1 {j(d(x_1, x))} \int_{\mc M} \Error(x, x_0) f(x_0) \dd x_0 \dd x \les \bigg(\sup_{x_0 \in \mc M} \int_{\mc M} j(d(x, x_0)) \Error(x, x_0) \dd x\bigg) \int_{\mc M} \frac {f(x_0)} {j(d(x_1, x_0))} \dd x_0 \les \|f\|_\K
\ee
would imply that $\Error \in \B(\K)$. However, since in general $j(r) \Error(x, x_0)$ is not in $L^1_x$, the error term $E$ need not belong to $\U(\K)$.

However, for any $\delta>0$, as $j_\delta(r) \Error(x, x_0) \in L^1_x$, by the same reasoning as (\ref{reason}) $\Error \in \B(\K_\delta)$, where recall $\K_\delta \subset \K$ is defined using $j_\delta$ (\ref{jdelta}). Moreover, now we can state that $\Error, r\Error(x, x_0) \in \B(L^1, \K_\delta)$, with the same proof as (\ref{fir}).

Furthermore, using the explicit form of $\Error$ (\ref{explicit}) and exploiting the $\delta$ difference in exponential decay rates, we can upgrade (\ref{fir}) to
$$
j'_\delta(r) \Error(x, x_0) \in \B(L^1) \cap \B(L^1, \K),
$$
which also implies that $j'_\delta(r) \Error(x, x_0) \in \B(L^1, \tilde \K) \subset \B(\tilde \K)$, as well as $j_\delta(r) \Error \in \B(L^1, \tilde \K)$, since $j_\delta \les j'_\delta$, again with norms of size $O(\epsilon)$, for fixed $\delta$.

As in (\ref{approx2}), for $t  \geq 0$ the approximate sine propagator
$$
S_0(t)(x, x_0) = \frac 1 {4\pi \sqrt {a(x)}} \delta_{d(x_0, x)=t}
$$
is a solution of the equation
$$
\partial^2_t S_0 - (\Delta - \kappa_0) S_0 = E,\ S_0(0)=0,\ \partial_t S_0(0)=\delta_{x_0}(x),
$$
where
$$
E(t)(x, x_0) = \Error(x, x_0) \delta_{d(x_0, x)=t}.
$$
Both $S_0$ and $E$ are supported on the light cone, meaning that $S_0(t)$ and $E(t)$ are supported on $\partial B(x_0, t)$ for each $t \geq 0$.


Recall that the sine propagator is given by the series (\ref{sseries})
$$
S = S_0 + S_0 * E + S_0 * E * E + \ldots = S_0 * (I-E)^{-1},
$$
where $*$ is the algebra operation of $\U$ (\ref{s0e1}). Since the sine propagator $S$ and the approximate propagator $S_0$ are self-adjoint, $S=S^*$, by taking the adjoint one also gets
$$
S = S_0 + E^* \ast S_0 + E^* \ast E^* \ast S_0 + \ldots = (I-E^*)^{-1} * S_0.
$$
Since multiplication by $t$ acts as a derivation,
\be\lb{der}
tS=(tS_0) * (I-E)^{-1} + S_0 * (I-E)^{-1} * (tE) * (I-E)^{-1}.
\ee

We estimate the error $E$ in the $\U(L^1)$ norm: for each $x_0 \in \mc M$
$$
\int_{\mc M} \int_0^\infty |E(t)(x, x_0)| \dd t \dd x = \int_{\mc M} \int_0^\infty |\delta_{d(x_0, x)=t} \Error(x, x_0)| \dd t \dd x = \|\Error(x, x_0)\|_{L^1_x} \les \epsilon.
$$
Thus $E \in \U(L^1)$, with $\|E\|_{\U(L^1)} \les \epsilon$. In the same manner, our prior estimates for $\Error$ immediately imply that
\be\lb{ee1}
E \in \U(L^1) \cap \U(\K_\delta),\ tE(t) \in \U(L^1, \K_\delta),
\ee
as well as
\be\lb{ee2}
j'_\delta(t) E(t) \in \U(L^1) \cap \U(\tilde \K),\ j_\delta(t)E(t) \in \U(L^1, \tilde \K),
\ee
with norms of size $O(\epsilon)$, for fixed $\delta$. More estimates are true, but these suffice for our purposes.

Regarding $S_0$, observe that
\be\lb{s01}
S_0 \in \U(L^1, \K^*) \cap \U(\K, L^\infty),\ tS_0 \in \U(L^1, L^\infty)
\ee
and
\be\lb{s02}
j(t) S_0(t) \in \U(L^1, L^\infty),\ j'(t) S_0(t) \in \U(L^1, \tilde \K^*) \cap \U(\tilde \K, L^\infty).
\ee

The estimates we have selected (\ref{ee1}) imply that
$$
(I-E)^{-1} \in \U(L^1) \cap \U(\K_\delta),\ (I-E)^{-1} * (tE) * (I-E)^{-1} \in \U(L^1, \K_\delta).
$$
Together with (\ref{s01}) and (\ref{der}), this implies that
$$
S \in \U(L^1, \K^*),\ tS \in \U(L^1, L^\infty),
$$
and using the self-adjointness of $S$ we also obtain $S \in \U(\K, L^\infty)$, three estimates that we wanted to prove.

To prove exponential decay, first note that $\sinh(t+s) = \sinh t \cosh s + \cosh t \sinh s$ and
$$
\cosh(t+s) = \cosh t \cosh s + \sinh t \sinh s < 2 \cosh t \cosh s,
$$
meaning that $|j(t+s)| \leq |j(t)| |j'(s)| + |j'(t)| |j(s)|$ and $|j'(t+s)| \leq 2 |j'(t)| |j'(s)|$ and same for $j_\delta$.

Then for $T_1$ and $T_2$ supported on $[0, \infty)$
\be\lb{comp1}
\|j(t) (T_1 * T_2)\|_{\U(L^1, \tilde \K)} \leq \|j(t) T_1\|_{\U(L^1, \tilde \K)} \|j'(t) T_2\|_{\U(L^1)} + \|j'(t) T_1\|_{\U(\tilde \K)} \|j(t) T_2\|_{\U(L^1, \tilde \K)},
\ee
\be\lb{comp2}
\|j(t) (T_1 * T_2)\|_{\U(L^1, L^\infty)} \leq \|j(t) T_1\|_{\U(L^1, L^\infty)} \|j'(t) T_2\|_{\U(L^1)} + \|j'(t) T_1\|_{\U(\tilde \K, L^\infty)} \|j(t) T_2\|_{\U(L^1, \tilde \K)},
\ee
and
\be\lb{comp3}
\|j'(t) (T_1 * T_2)\|_{\U(L^1)} \leq 2 \|j'(t) T_1\|_{\U(L^1)} \|j'(t) T_2\|_{\U(L^1)},\ \|j'(t) (T_1 * T_2)\|_{\U(\tilde K)} \leq 2 \|j'(t) T_1\|_{\U(\tilde K)} \|j'(t) T_2\|_{\U(\tilde K)},
\ee
as well as
\be\lb{comp4}
\|j'(t) (T_1 * T_2)\|_{\U(L^1, \tilde \K^*)} \leq 2 \|j'(t) T_1\|_{\U(L^1, \tilde \K^*)} \|j'(t) T_2\|_{\U(L^1)},\ \|j'(t) (T_1 * T_2)\|_{\U(\tilde K, L^\infty)} \leq 2 \|j'(t) T_1\|_{\U(\tilde K, L^\infty)} \|j'(t) T_2\|_{\U(\tilde K)}.
\ee
The same estimates also hold with $j_\delta$ instead of $j$.

Using (\ref{comp3}), from (\ref{ee2}) we obtain that $j'_\delta(t) (I-E(t))^{-1} \in \U(L^1) \cap \U(\tilde \K^*)$, while using (\ref{comp1}) we obtain that $j_\delta(t)(I-E(t))^{-1} \in \U(L^1, \tilde \K)$, both for sufficiently small $\epsilon$. Combining this with (\ref{s02}) and using (\ref{comp2}) and (\ref{comp4}) leads to
$$
j_\delta(t) S(t) \in \U(L^1, L^\infty),\ j'_\delta(t) S(t) \in \U(L^1, \tilde \K^*) \cap \U(\tilde \K, L^\infty),
$$
which concludes the proof.


%
%
\end{proof}

%

\section{From propagator estimates to decay estimates}

\subsection{Constant positive curvature} This is an important case due to being in some sense the canonical model for conjugate points. To keep things brief, this will be shortest section of the current paper, to be followed by a more detailed exposition in a future paper.

\begin{proof} Normalize the sectional curvature to $1$ and consider the free shifted Laplacian $-\Delta+1$ on $\S^3$, without any added potential ($V=0$). By the results of Proposition \ref{prop1}, the free sine propagator for the shifted wave equation (\ref{wave}) is given for $t>0$ by
$$
S_0(t)(x, x_0) = \frac {\sin(t \sqrt{-\Delta+1})}{\sqrt{-\Delta+1}}(x, x_0) = \frac 1 {4\pi j(t)} \delta_{d(x_0, x)=t},
$$
where $j(t) = \sin t$.

In particular, $S_0$ is periodic of period $2\pi$, so it suffices to study the equation on an interval of period length. Since sine is an odd function, $S_0(t)=-S_0(2\pi-t)$. In particular, $S_0(\pi)=0$. Thus, it is enough to study the evolution on an interval of length $\pi$.

The integral kernel of $S_0(t)$ is a measure for each $x_0$, positive on $(0, \pi)$ and negative on $(\pi, 2\pi)$.

With respect to any choice of origin $x_0 \in \S^3$, one can express integrals in spherical coordinates as
$$
\int_{\S^3} f = \int_0^\pi \int_{\partial B(x_0, r)} f(r, \omega) \dd A = \int_0^\pi \int_{\S^2} f(r, \omega) j^2(r) \dd \omega \dd r.
$$

Let $C_0(t)=\cos(t \sqrt{-\Delta+1})$. It is easy to see that $C_0(\pi)$ is the linear transformation that turns $f(x)$ into $-f(-x)$. The integral kernel of $C_0(t)$ is no longer a measure, but a distribution, given by the explicit formula
\be\lb{c0}
C_0(t)(x, x_0) = \frac {j'(t)}{4\pi j^2(t)} \delta_{d(x_0, x)=t} + \frac 1 {4\pi j(t)} \delta_{d(x_0, x)=t} \partial_r,
\ee
where recall $r=d(x_0, x)$.

By direct computation, one has the fundamental estimates
\be\lb{fund1}
\int_0^\pi |S_0(t)(x, x_0)| \dd t = \frac 1 {4\pi j(d(x, x_0))} \les \frac 1 {d(x, x_0)} + \frac 1 {d(x, -x_0)},\ \int_0^\pi j(t) |S_0(t)(x, x_0)| \dd t = \frac 1 {4\pi},
\ee
as well as
\be\lb{fund2}
\int_0^\pi |C_0(t) f|(x) \dd t \leq \frac 1 {2\pi} \int_{\S^3} \frac {|\nabla f(x_0)| \dd x_0}{j(d(x, x_0))} + \frac 1 {2\pi} \int_{\S^2} |f| \les \int_{\S^2} \frac {|\nabla f(x_0)|}{d(x, x_0)} + \frac {|\nabla f(x_0)|}{d(x, -x_0)} + |f(x_0)| \dd x_0,
\ee
where $\S^2$ is the equator between $x$ and $-x$,
$$
\frac 1 {4\pi} \int_{\S^2} |f| = [\frac {\sin(\pi/2)\sqrt{-\Delta+1}}{\sqrt{-\Delta+1}}f](x) \les \int_{\S^3} |\nabla f(x_0)| + |f(x_0)| \dd x_0,
$$
and finally
\be\lb{fund3}
\int_0^\pi j(t) |C_0(t) f|(x) \dd t \les \int_{\S^3} |\nabla f(x_0)| + |f(x_0)| \dd x_0.
\ee

From (\ref{fund1}), for $t \in (0, \pi/2]$
$$
2\bigg|\frac {\cos(t \sqrt{-\Delta+1})}{-\Delta+1}(x, x_0)\bigg| \leq \int_{t}^{\pi-t} |S_0(\tau)(x, x_0)| \dd \tau \leq \frac 1 {4\pi j(t)},
$$
so for $t \in (0, \pi)$
\be\lb{next1}
\bigg\|\frac {\cos(t \sqrt{-\Delta+1})}{-\Delta+1} f\bigg\|_{L^\infty} \les \frac 1 {\sin t} \|f\|_{L^1},\ \|\cos(t \sqrt{-\Delta+1}) f\|_{L^\infty} \les \frac 1 {\sin t} \|(-\Delta+1)f\|_{L^1} \les \frac 1 {\sin t} \|f\|_{W^{2, 1}}.
\ee
In the same way, from (\ref{fund3}), for $t \in (0, \pi)$
\be\lb{next2}
\bigg\|\frac {\sin(t \sqrt{-\Delta+1})}{\sqrt{-\Delta+1}} f\bigg\|_{L^\infty} \les \frac 1 {\sin t} \int_{\S^3} |\nabla f(x_0)| + |f(x_0)| \dd x_0 = \frac 1 {\sin t} \|f\|_{W^{1, 1}}.
\ee
From (\ref{next1}) and (\ref{next2}) we infer the usual $L^p \to L^{p'}$ decay estimates: for $p \in (1, 2]$ and $t \in (0, \pi)$
$$
\bigg\|\frac {e^{it\sqrt{-\Delta+1}}}{(-\Delta+1)^{\frac 1 {p} - \frac 1 {p'}}} f\bigg\|_{L^{p'}} \les (\sin t)^{\frac 1 {p'} - \frac 1 p} \|f\|_{L^p}.
$$
These imply all the usual Strichartz estimates for the wave equation (\ref{wave}) on $\S^3$.

As already mentioned, an alternative point of view is that the shifted Laplacian $-\Delta+1$ has discrete spectrum consisting of $\{(\ell+1)^2 \mid \ell \in \Z, \ell \geq 0\}$, where each eigenvalue $(\ell+1)^2$ has multiplicity $(\ell+1)^2$. Due to the high multiplicity of the eigenvalues, the sphere can be considered a special case, the ``worst-case scenario'' for decay, by comparison with a generic compact manifold of the same dimension.

Let $P_\ell$ be the $L^2$ spectral projection corresponding to the eigenvalue $(\ell+1)^2$. Then the wave propagators are given by
$$
S_0(t) = \sum_{\ell=0}^\infty \frac {\sin(t(\ell+1))}{\ell+1} P_\ell,\ C_0(t) = \sum_{\ell=0}^\infty \cos(t(\ell+1)) P_\ell.
$$
Conversely, $P_\ell$ can be expressed in terms of the free sine propagator:
$$
P_\ell = \frac {\ell+1} \pi \int_0^{2\pi} S_0(t) \sin(t(\ell+1)) \dd t.
$$

This leads to the following bounds for $P_\ell$:
$$
\sup_{\ell} \frac 1 {\ell+1} P_\ell(x, x_0) \les \frac 1 {d(x, x_0)} + \frac 1 {d(x, -x_0)},\ \sup_{x, x_0} \sup_\ell \frac 1 {(\ell+1)^2} P_\ell(x, x_0) < \infty,
$$
which imply $\K \to L^\infty$, $L^1 \to \K^*$, and various $L^p \to L^q$ bounds.

We are also interested in dyadic projections such as $\ds \tilde P_k = \sum_{\ell=2^k-1}^{2^{k+1}-2} P_\ell$. In terms of the sine propagator,
$$
\tilde P_k = \int_0^{2\pi} S(t) F(t, k) \dd t
$$
where $F$ is an expression such that $\ds |F(t, k)| \les \frac {2^k}{\sin t}$. Then by (\ref{fund1})
$$
\sup_k \frac 1 {2^k} \tilde P_k \les \frac 1 {d^2(x, x_0)} + \frac 1 {d^2(x, -x_0)}.
$$
\end{proof}

\subsection{Constant negative curvature} \begin{proof} Normalize the sectional curvature to $-1$ and consider the free shifted Laplacian $-\Delta-1$ on $\H^3$, to which we shall later add a small potential $V$. By Proposition \ref{prop1}, the sine propagator for (\ref{wave}) is given for $t>0$ by
$$
S_0(t)(x, x_0) = \frac {\sin(t \sqrt{-\Delta-1})}{\sqrt{-\Delta-1}}(x, x_0) = \frac 1 {4\pi j(t)} \delta_{d(x_0, x)=t},
$$
where now $j(t)=\sinh t$.

The following fundamental estimates hold for the free propagators:
\be\lb{fund4}
\int_0^\infty |S_0(t)(x, x_0)| \dd t = \frac 1 {4\pi j(r)},\ \int_0^\infty t |S_0(t)(x, x_0)| \dd t \leq \int_0^\infty j(t) |S_0(t)(x, x_0)| \dd t = \frac 1 {4\pi},
\ee
where $r=d(x_0, x)$. The integral kernel of the cosine propagator $C_0(t)$ is again given by (\ref{c0})
$$
C_0(t)(x, x_0) = \frac {j'(t)}{4\pi j^2(t)} \delta_{d(x_0, x)=t} + \frac 1 {4\pi j(t)} \delta_{d(x_0, x)=t} \partial_r,
$$
where now $j(t)=\sinh t$. It fulfills the estimates
\be\lb{fund5}
\int_0^\infty |C_0(t) f|(x) \dd t \leq \frac 1 {2\pi} \int_{\H^3} \frac {|\nabla f(x_0)| \dd x_0}{j(r)}
\ee
and
\be\lb{fund6}
\int_0^\infty t |C_0(t) f|(x) \dd t \leq \int_0^\infty j(t) |C_0(t) f|(x) \dd t \les \int_{\H^3} |\nabla f(x_0)| \dd x_0.
\ee
Hence we infer the exponential $L^p$ decay estimates
\be\lb{exp1}
\|S_0(t) f\|_{L^\infty} \les |j(t)|^{-1} \|f\|_{\dot W^{1, 1}},\ \|C_0(t) f\|_{L^\infty} \les |j(t)|^{-1} \|(-\Delta-1)f\|_{L^1} \les |j(t)|^{-1} \|f\|_{W^{2, 1}},
\ee
and
\be\lb{exp2}
\bigg\|\frac {e^{it\sqrt{-\Delta-1}}}{(-\Delta-1)^{\frac 1 {p} - \frac 1 {p'}}} f\bigg\|_{L^{p'}} \les |j(t)|^{\frac 1 {p'} - \frac 1 p} \|f\|_{L^p}.
\ee
In turn, these imply all the usual Strichartz estimates for the shifted wave equation (\ref{wave}).

Next, consider Schr\"{o}dinger's equation (\ref{sch}). By functional calculus, the solution is given by
\be\lb{long}\begin{aligned}
	e^{itH_0} f &= \frac 1 {2\pi i} \int_0^\infty e^{it\lambda} [R_0(\lambda-i0) - R_0(\lambda+i0)] \dd \lambda \\
	&= \frac i {\pi} \int_{-\infty}^\infty e^{it\lambda^2} R_0((\lambda+i0)^2) \lambda \dd \lambda \\
	&= C t^{-1} \int_{-\infty}^\infty e^{it\lambda^2} \partial_\lambda [R_0((\lambda-i0)^2)] \dd \lambda \\
	&=\tilde C t^{-1} \int_{-\infty}^\infty \mc F[e^{it\lambda^2}] \mc F \{\partial_\lambda [R_0((\lambda-i0)^2)]\} \dd \lambda
\end{aligned}\ee
where $R_0$ is the free resolvent, $R_0(\lambda)=(-\Delta-1-\lambda)^{-1}$, and $\mc F$ is the Fourier transform. The first Fourier transform is of size $\mc F[e^{it\lambda^2}] \les t^{-1/2}$, while the second is given by
\be\lb{fou}
\mc F \{R_0((\lambda+i0)^2)\} = \chi_{t \geq 0} S_0(t),\ \mc F \{\partial_\lambda [R_0((\lambda+i0)^2)]\} = \chi_{t \geq 0} it S_0(t). 
\ee
The latter expression is integrable by (\ref{fund4}), producing the desired Schr\"{o}dinger decay estimate
\be\lb{decay}
\|e^{itH} f\|_{L^\infty} \les |t|^{-3/2} \|f\|_{L^1}.
\ee
In turn, this implies all the usual Strichartz estimates for Schr\"{o}dinger's equation (\ref{sch}).
\end{proof}

\subsection{Adding a small scalar potential} The above constant negative curvature estimates work in the same manner in the presence of a small scalar potential $V \in \K$, after replacing $R_0$ by the perturbed resolvent $R(\lambda)=(-\Delta-1+V-\lambda)$ and the free propagators $S_0(t)$ and $C_0(t)$ by the perturbed propagators
$$
S(t) = \frac {\sin(t \sqrt H)}{\sqrt H} = \frac {\sin(t \sqrt {-\Delta-1+V})}{\sqrt {-\Delta-1+V}},\ C(t) = \cos(t \sqrt H) = \cos(t \sqrt{-\Delta-1+V}).
$$
\begin{proof}
By the resolvent identity, for small $V$ the perturbed resolvent can be expanded into a Born series
$$
R=R_0(I+VR_0)^{-1}=R_0-R_0VR_0+R_0VR_0VR_0-\ldots.
$$
Equivalently, after taking the Fourier transform as in (\ref{fou}), by Duhamel's formula the perturbed propagator can be expressed as follows
\be\lb{duhamel}\begin{aligned}
\chi_{t \geq 0} S &= [\chi_{t \geq 0} S_0(t)] * (I + [V\chi_{t \geq 0} S_0(t)])^{-1} \\
&= \chi_{t \geq 0} S_0 - [\chi_{t \geq 0} S_0] * [V\chi_{t \geq 0}S_0] + [\chi_{t \geq 0} S_0] * [V\chi_{t \geq 0}S_0] * [V\chi_{t \geq 0}S_0] - \ldots.
\end{aligned}\ee
where the algebra operation $*$ is defined by (\ref{s0e1}); also see (\ref{s0e}).

Recall the algebroid $\U$ (\ref{u}). Since
$$
\|V \chi_{t \geq 0} S_0(t)\|_{\U(L^1) \cap \U(\K)} \les \|V\|_\K,\ \|\chi_{t \geq 0} S_0(t) V\|_{\U(L^\infty) \cap \U(\K^*)} \les \|V\|_\K,
$$
when $\|V\|_\K$ is sufficiently small the infinite series that equals $(I+[V \chi_{t \geq 0} S_0])^{-1}$ converges and
\be\lb{auxx}
(I+[V \chi_{t \geq 0} S_0])^{-1} \in \U(L^1) \cap \U(\K).
\ee
The convergence of this series and of (\ref{duhamel}) also guarantees the absence of eigenvalues, embedded or otherwise, and of threshold resonances.

Since $\chi_{t \geq 0} S_0 \in \U(L^1, \K^*) \cap \U(\K, L^\infty)$, by (\ref{duhamel}) and (\ref{auxx})
\be\lb{a1}
\chi_{t \geq 0} S(t) \in \U(L^1, \K^*) \cap \U(\K, L^\infty)
\ee
as well and, since multiplying by $t$ is a derivation on $\U$,
\be\lb{a2}
\chi_{t \geq 0} tS = [\chi_{t \geq 0} tS_0] * (I+[V \chi_{t \geq 0} S_0])^{-1} - S_0 * (I+[V \chi_{t \geq 0} S_0])^{-1} * (V[\chi_{t \geq 0}tS_0]) * (I+[V \chi_{t \geq 0} S_0])^{-1} \in \U(L^1, L^\infty).
\ee
Hence (\ref{decay}) also holds in the presence of a small potential.

Regarding the cosine propagator $C(t)=\cos(t \sqrt H)$, taking a $t$ derivative in (\ref{duhamel}) produces
$$\begin{aligned}
\chi_{t \geq 0} C &= (I + [\chi_{t \geq 0} S_0(t) V])^{-1} * [\chi_{t \geq 0} C_0(t)] \\
&= \chi_{t \geq 0} S_0 - [\chi_{t \geq 0} S_0 V] * [\chi_{t \geq 0}C_0] + [\chi_{t \geq 0} S_0 V] * [\chi_{t \geq 0}S_0 V] * [\chi_{t \geq 0}C_0] - \ldots.
\end{aligned}$$

As a consequence of (\ref{fund5}), (\ref{fund6}), and (\ref{auxx}), estimates analogous to (\ref{a1}) and (\ref{a2}) hold for the perturbed cosine propagator $C(t)$:
$$
f \mapsto \int_0^\infty |C(t) f| \dd t \in \B(\dot W^{1, 1}, \K^*) \cap \B(\nabla^{-1} \K, L^\infty),\ f \mapsto \int_0^\infty |t C(t) f| \in \B(\dot W^{1, 1}, L^\infty).
$$

Estimates (\ref{a1}) and (\ref{a2}), together with the analogous estimates for the cosine propagator, lead to the polynomial decay bounds
$$
\|S(t)f\|_{L^\infty} \les |t|^{-1} \|f\|_{\dot W^{1, 1}},
$$
\be\lb{cos_poly}
\|C(t) f\|_{L^\infty} \les |t|^{-1} \|(-\Delta-1)f\|_{L^1} \les |t|^{-1} \|f\|_{W^{2, 1}}
\ee
and
$$
\bigg\| \frac {e^{it\sqrt{-\Delta+1}}}{(-\Delta+1)^{\frac 1 p - \frac 1 {p'}}} \bigg\|_{L^{p'}} \les |t|^{\frac 1 {p'} - \frac 1 p} \|f\|_{L^p}.
$$

In order to obtain exponential decay bounds, suppose $V \in \tilde \K$, the modified Kato space $\tilde \K = \K \cap L^1$ (\ref{modk}), which can also be defined as
$$
\tilde \K = \{V \mid \sup_{x_0 \in \H^3} \int_{\H^3} \frac {|V(x)| j'(d(x_0, x))}{j(d(x_0, x))} \dd x < \infty\}.
$$
Again note that $L^1 \cap \K = \tilde K$, where $\K$ improves local regularity.

Then
$$
\sup_{x_0} \bigg\|\int_0^\infty j'(t) |V(x)| |S_0(t)(x, x_0)| \dd t\bigg\|_{L^1_x} \les \sup_{x_0} \int_{\H^3} \frac {|V(x)| j'(d(x_0, x))}{j(d(x_0, x))} \dd x \les \|V\|_{\tilde \K}
$$
so $\chi_{t \geq 0} j'(t) V S_0(t) \in \U(L^1)$. Likewise, $\chi_{t \geq 0} j'(t) V S_0(t) \in \U(\tilde \K)$ because
$$
\int_{\H^3} \frac {|V(x)| j'(d(x_0, x))}{j(d(x_0, x))} f(x_0) \dd x_0 \les |V(x)| \|f\|_{\tilde \K}.
$$
Moreover
$$
\sup_{x_0} \bigg\|\int_0^\infty j(t) |V(x)| |S_0(t)(x, x_0)| \dd t\bigg\|_{\tilde \K} \les \|V\|_{\tilde \K},
$$
meaning that $\chi_{t \geq 0} j(t) V S_0 \in \U(L^1, \tilde \K)$.

Regarding $S_0$, observe again that
\be\lb{props}
j(t) S_0(t) \in \U(L^1, L^\infty),\ j'(t) S_0(t) \in \U(L^1, \tilde \K^*) \cap \U(\tilde \K, L^\infty).
\ee

We now use these inequalities when summing the series
$$
(I + [V \chi_{t \geq 0} S_0(t)])^{-1} = I - V \chi_{t \geq 0} S_0(t) + [V \chi_{t \geq 0} S_0(t)] * [V \chi_{t \geq 0} S_0(t)] - \ldots
$$
The bounds (\ref{comp3}) lead to
$$
j'(t) (I + \chi_{t \geq 0} V S_0(t))^{-1} \in \U(L^1) \cap \U(\tilde \K)
$$
if $\|V\|_{\tilde \K}$ is sufficiently small. The same and (\ref{comp1}) produce
$$
j(t) (I + \chi_{t \geq 0} V S_0(t))^{-1} \in \U(L^1, \tilde \K).
$$
Finally, using (\ref{comp2}), (\ref{props}), and (\ref{comp4}) as well, we can sum the Duhamel expansion (\ref{duhamel}) and obtain that
\be\lb{exp_decay}
\chi_{t \geq 0} j'(t) S \in \U(L^1, \tilde \K^*) \cap \U(\tilde \K, L^\infty),\ \chi_{t \geq 0} j(t) S \in \U(L^1, L^\infty).
\ee
The analogous estimates for the cosine propagator are also true and proved in the same manner.

Thus the exponential decay estimates (\ref{exp1}) and (\ref{exp2}) are preserved after adding a small potential in $\tilde \K$.
\end{proof}

\subsection{Non-constant negative sectional curvature with a potential}
\begin{proof}[Proof of Theorem \ref{thm1}] Let $S$ be the free sine propagator. In Proposition \ref{prop3} we proved that $tS(t) \in \U(L^1, L^\infty)$, $S \in \U(L^1, \K^*) \cap \U(\K, L^\infty)$, as well as that $S$ has the exponential decay properties analogous to (\ref{props})
$$
j_\delta(t) S(t) \in \U(L^1, L^\infty),\ j'_\delta(t) S(t) \in \U(L^1, \tilde \K^*) \cap \U(\tilde \K, L^\infty).
$$
	
Let $S_V$ be the perturbed sine propagator. If $V \in \K$ is small in norm, in the same manner as in (\ref{a1}) we get that $S \in \U(L^1, \K^*) \cap \U(\K, L^\infty)$ and in the same manner as in (\ref{a2}) we get that $tS_V(t) \in \U(L^1, L^\infty)$. The latter implies the $t^{-1}$ $L^1 \to L^\infty$ decay estimate for the cosine propagator, same as (\ref{cos_poly}).

The $t^{-3/2}$ decay for Schr\"{o}dinger's equation also follows from $tS_V(t) \in \U(L^1, L^\infty)$, by the same computation as in (\ref{long}).

If in addition $V \in \tilde \K$ and $V$ is small in norm, then using the same algebra structure (\ref{comp1}--\ref{comp4}) leads to the analogue of (\ref{exp_decay})
$$
j_\delta(t) S_V(t) \in \U(L^1, L^\infty),\ j'_\delta(t) S_V(t) \in \U(L^1, \tilde \K^*) \cap \U(\tilde \K, L^\infty).
$$
In particular, for $t>0$
$$
\bigg\|\int_t^\infty |S_V(t)(x, y)| \dd t \bigg\|_{\B(L^1, L^\infty)} \les j_\delta(t)^{-1}.
$$
This proves the exponential decay with a loss of the perturbed cosine propagator. 
\end{proof}

\section*{Acknowledgments}
I would like to thank Gong Chen, Wilhelm Schlag, and Robert Strichartz for the useful conversations. I would also like to thank Wilhelm Schlag for his hospitality during my visits at the University of Chicago when this paper was conceived.

\end{document}